\documentclass{article}

\usepackage[english]{babel}

\usepackage[letterpaper,top=2cm,bottom=2cm,left=3cm,right=3cm,marginparwidth=1.75cm]{geometry}

\usepackage{amsmath}
\usepackage{amsthm}
\usepackage{graphicx}
\usepackage[colorlinks=true, allcolors=blue]{hyperref}
\usepackage{amssymb}
\usepackage{tikz}
\usetikzlibrary{positioning}
\usepackage[utf8]{inputenc}
\usepackage[T1]{fontenc}
\usepackage{tcolorbox}
\usepackage{enumitem}
\usepackage{multicol}
\usepackage[table]{xcolor} 
\usepackage{array} 
\usetikzlibrary{arrows}
\usetikzlibrary{arrows.meta}

\usepackage{subcaption} 

\usetikzlibrary{external}
\usepackage{svg}

\newtheorem{theorem}{Theorem}[section]
\newtheorem{corollary}[theorem]{Corollary}

\newtheorem{lemma}[theorem]{Lemma}
\newtheorem{problem}[theorem]{Problem}

\newtheorem{observation}[theorem]{Observation}

\usepackage{xcolor}
\usepackage{ifthen}

\newboolean{shownotes}
\setboolean{shownotes}{true} 

\newcommand{\note}[1]{%
  \ifthenelse{\boolean{shownotes}}%
    {\textcolor{red}{[Note: #1]}}%
    {}%
}

\newcommand{\R}[0]{\mathbb{R}}

\newcommand{\Nb}[0]{\mathbb{N}}

\newcommand{\Zb}[0]{\mathbb{Z}}

\newcommand{\Ic}[0]{\mathcal{I}}

\newcommand{\Rc}[0]{\mathcal{R}}

\newcommand{\Fc}[0]{\mathcal{F}}
\newcommand{\Ec}[0]{\mathcal{E}}
\newcommand{\Ac}[0]{\mathcal{A}}

\DeclareMathOperator{\lcm}{lcm}

\title{Coloring Questions on Axis-Parallel Rectangles and Arithmetic Progressions}
\author{Gábor Damásdi}

\begin{document}
\maketitle

\begin{abstract}
We present an explicit family of hypergraphs with arbitrarily large uniformity and chromatic number that admit realizations in both geometric and number-theoretic settings. As an application,  we give a new proof of a theorem of Chen, Pach, Szegedy, and Tardos. They showed that for any constants $c,k\ge1$, there exists a finite point set $P$ in the plane with the following property: for every coloring of $P$ with $c$ colors, there is an axis-parallel rectangle containing at least $k$ points, all of the same color. Their original proof is probabilistic; we present an explicit construction. Moreover, in the case $k=2$, we show that one can even realize a graph that has arbitrarily large girth and chromatic number simultaneously. 
We also answer a question of Pálvölgyi on coloring sets of integers with respect to certain finite arithmetic progressions. Finally, we give an application to coloring partially ordered sets.


\end{abstract}

\section{Introduction}
Coloring questions on graphs and hypergraphs arising from geometric or number-theoretic configurations play an important role in combinatorics. In this paper we present a family of hypergraphs of large chromatic number that appear naturally in both  scenarios. 

A proper $c$-coloring of a hypergraph $H=(V,E)$ is a map $V\to [c]$ such that no edge is monochromatic. The \emph{chromatic number} of $H$ is the smallest $c$ for which such a coloring exists. 

Given a set $P$ of points in the plane and a family $\Fc$ of planar sets, let $H(P,\Fc)$ denote the hypergraph given by incidence, i.e., the vertex set is $P$ and the edge set is $\{P\cap F\mid  F\in \Fc\}$. If the members of $\Fc$ belong to a  geometric family (e.g., disks, rectangles, convex sets), then we say that $H(P,\Fc)$ is \emph{realized} by that family. 
Coloring questions on geometric hypergraphs have a vast literature, see \cite{damasdi2025coloring} for a recent survey and the webpage \cite{cogezoo} for up-to-date results. Some of the families that will be of interest to us are:
\begin{itemize}[topsep=5pt, partopsep=0pt, itemsep=0pt, parsep=0pt]
    \item Axis-parallel rectangles: sets of the form $\{(x,y)\mid x_1\le x\le x_2,y_1\le y\le y_2\}$ for some $x_1,x_2,y_1,y_2\in \R$.
    \item Bottomless axis-parallel rectangles: sets of the form $\{(x,y)\mid x_1\le x\le x_2,y\le y_2\}$ for some $x_1,x_2,y_2\in \R$.
    \item Horizontal strips: sets of the form $\{(x,y)\mid y_1\le y\le y_2\}$ for some $y_1,y_2\in \R$.
\end{itemize}


 In this paper, we focus on the case where $\Fc$ consists of axis-parallel rectangles. This was first considered by Chen, Pach, Szegedy, and Tardos \cite{chen2009delaunay}. Using a probabilistic argument, they showed that we can realize hypergraphs of arbitrarily large chromatic number and uniformity.

\begin{theorem}[Chen, Pach, Szegedy, and Tardos \cite{chen2009delaunay}]\label{thm:chenpach}
    For any constants $c,k\ge1$, there exists a finite set $P$ of points in the plane with the following property: for every coloring of $P$ with $c$ colors, there is an axis-parallel rectangle containing $k$ points, all of the same color.
\end{theorem}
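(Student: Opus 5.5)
The plan is to prove Theorem~\ref{thm:chenpach} by an explicit recursive construction rather than probabilistically. We induct on the number of colors $c$, with $k$ fixed. We also carry a stronger, "rooted" hypothesis that makes the induction go through.

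\textbf{Hypothesis $(\ast)_{c,j}$.} There is a finite point set $P$, placed inside a small box and in general position, together with a distinguished family of axis-parallel rectangles. For every $c$-coloring of $P$, one of two things happens. Either some distinguished rectangle contains at least $k$ points of $P$, all of one color. Or, for the prescribed outer point $x$ of the construction, there is a chain $x=y_0,y_1,\dots,y_{j-1}$ of points of the color of $x$, and a rectangle meeting $P\cup\{x\}$ exactly in this chain. For $j=k$ the second alternative is itself a monochromatic $k$-point rectangle, so $(\ast)_{c,k}$ gives the theorem.

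The base case $c=1$ is trivial: take any $k$ points on a diagonal line, so one rectangle contains exactly them.

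\textbf{Inductive step, one level.} Given a set $P_{c-1}$ forcing a monochromatic $k$-set under every $(c-1)$-coloring, place points $x_1,\dots,x_M$ on a steep increasing staircase. Next to each $x_i$, in a tiny box $B_i$ that lies to the upper right of $x_i$ and below/left of $x_{i+1}$, put an affinely shrunk copy of $P_{c-1}$. Affine scaling of each axis separately preserves which subsets are cut out by rectangles, so any monochromatic rectangle inside a copy is still one in the whole picture.

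Now fix a $c$-coloring. If the copy in $B_i$ avoids the color of $x_i$, then it is $(c-1)$-colored, so it already contains a monochromatic $k$-set cut out by a rectangle. Otherwise some point $y\in B_i$ has the same color as $x_i$.

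\textbf{Iterating to depth $k$.} Replace each single point of the copy by a nested copy of the whole one-level gadget, so that the found point $y$ itself plays the role of an "$x$" one level deeper. Repeating this $k-1$ times produces a chain $x=y_0,y_1,\dots,y_{k-1}$ of equal colors, where each $y_{t+1}$ lies in a box attached to $y_t$. This is a tree-like hypergraph: vertices are the nodes of a tree, edges are the vertical chains, and each node's subtree carries a $(c-1)$-forcing copy. Its size is explicit, a tower of the form $|P_c|\le (|P_{c-1}|)^{O(k)}$.

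\textbf{Main obstacle.} The hard step is the geometric realization of the chain edges. We need a rectangle containing $y_0,\dots,y_{k-1}$ and \emph{no other point}, or at least no point of a different color, even though each $y_{t+1}$ sits in a box crowded with other points.

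My first attempt is an alternating placement. Put the box attached to a point strictly to its upper right, and inside each copy order the points so that every point is the lower-left "corner" of its own sub-box. Then for the chain, the rectangle with lower-left corner $y_0$ and upper-right corner $y_{k-1}$ intersects each intermediate box only in the nested sub-box containing the rest of the chain. I would alternate the direction of nesting between consecutive levels, using upper-right at one level and lower-right at the next, and choose the scaling factors decreasing fast enough. That should make the rectangle spanned by the chain miss all sibling copies.

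If this fails, my fallback is to weaken what we require: permit the rectangle to contain extra points only from the same deep sub-box. I would then absorb those points by strengthening $(\ast)$ so that the chain is taken as the lexicographically extreme monochromatic choice, which is nonempty-cut by construction.

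Verifying this "clean cut" property carefully is the real content of the proof. The coloring argument itself is routine pigeonhole once the realization is in place.
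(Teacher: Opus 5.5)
\textbf{Gap: the clean-cut realization you defer cannot be achieved for this design.} Your abstract coloring argument is correct: follow the root's color down through the forcing copies formed by each vertex's children. But in your construction, the $(c-1)$-forcing copy attached to a point consists of that point's own children. Already for $c=2$, where the copy is a single $k$-set, what you must realize is the $k$-ary tree hypergraph of depth $k$: all sibling edges together with all root-to-leaf path edges, each cut out cleanly. Every path edge is genuinely needed. Color one root-to-leaf path red and give every sibling set exactly one red child (the path child where there is one). Then no sibling set is monochromatic, and that path is the only red chain from the root. The paper points out, citing \cite{kumar2017minimum}, that for large $k$ this hypergraph has no realization by axis-parallel rectangles. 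So for large $k$, even your step from $c=1$ to $c=2$ fails, and alternating directions or tuning scales cannot fix it.

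Your first attempt already shows the mechanism. If the box $B$ attached to $y_0$ lies to its upper right, then any rectangle containing the chain contains the bounding box of $y_0$ and $y_1$. Hence it contains every point of the copy in $B$ that is dominated by $y_1$, whatever you do deeper down. Since any point of the copy can play the role of $y_1$, the copy would have to be a descending set. Rectangles cut only intervals out of a descending set, so the copy is $2$-colorable and cannot be $(c-1)$-forcing once $c\ge 3$. For $c=2$ the copy can be descending, but then the tiny boxes attached to the middle siblings fall inside the rectangle cutting out the siblings. Your fallback does not help either. Extra points in a chain rectangle are colored by the adversary, and choosing an extreme red child only guarantees that the skipped siblings have \emph{other} colors.

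\textbf{The missing idea is the paper's: use many trees and let the forcing copies run \emph{across} trees rather than among siblings.} With $m=|V(H_k^{c-1})|$, take $m^k$ trees and group their vertices into stages, each containing at most one vertex per tree. Cut each stage into blocks of $m$ consecutive vertices, each carrying a copy of $H_k^{c-1}$. For \emph{every} transversal $S'\in f_m(S)$ (one vertex from each block), create a separate child stage $T(S')$ consisting of one new child of each vertex of $S'$. In a $c$-coloring, every block contains a red vertex, since a copy missing red would be properly $(c-1)$-colored. So you pick one red vertex per block, pass to the corresponding child stage, and repeat until level $k-1$, where the chosen vertex has an entirely red path.

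This layout is realizable:
\begin{itemize}
\item Each stage lies on its own horizontal line. Children go on lower lines beneath their parents and are shifted slightly left into descending position.
\item The rectangle with the leaf as bottom-left corner and the root as top-right corner then cuts out exactly a path edge.
\item Each copy of $H_k^{c-1}$ is realized inductively as a very thin perturbation along its stage line. It neither meets other stages nor disturbs the path rectangles.
\end{itemize}
This works because the vertices of a forcing copy belong to different trees and are therefore far apart horizontally. Your sibling copies, by contrast, are squeezed into the same tiny box as the subtrees hanging from them.
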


 For any $k\ge 1$, Chekan and Ueckerdt gave an explicit set $P$ of points and a family $\Fc$ of horizontal strips and bottomless axis-parallel rectangles such that $H(P,\Fc)$ is $k$-uniform and not 2-colorable \cite{chekan2022polychromatic}. Since strips
and bottomless rectangles can be replaced by ordinary  rectangles without changing the incidence structure, their construction yields an explicit solution to Theorem \ref{thm:chenpach} in the case $c=2$. We generalize their construction to all $c>2$ and prove a slightly stronger statement. A set of points is called \emph{ascending} if the $x$-coordinate order of the points is the same as the $y$-coordinate order of the points. We say that a family of intervals $\Ic$ is \emph{nested} if for any $A,B\in \Ic$ we have $A\subseteq B$, $B\subseteq A$ or $A\cap B= \emptyset$.

 \begin{theorem}\label{thm:ascending}
     For any constants $c,k\ge1$, there exists an explicit finite set $P$ of points and a family $\Rc$ of axis-parallel rectangles in the plane with the following property: the $y$-projections of the rectangles in $\Rc$ form a \textbf{nested} family, and for every coloring of $P$ with $c$ colors, there is a rectangle in $\Rc$ that contains $k$ \textbf{ascending} points of $P$, all of the same color.
 \end{theorem}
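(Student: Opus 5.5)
Read literally, the statement asks only for a rectangle of $\Rc$ that \emph{contains} $k$ ascending points of $P$ of the same color. It does not require that these be \emph{all} the points of $P$ in the rectangle. Under that reading a direct pigeonhole argument works, and I would give it first.

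Set $N=c(k-1)+1$ and let $P=\{(i,i)\mid i=1,\dots,N\}$. This set is ascending, and so is every subset of it, since the $x$-order and the $y$-order of the points $(i,i)$ agree. Let $\Rc$ consist of the single rectangle $[0,N+1]\times[0,N+1]$. Its $y$-projection $\{[0,N+1]\}$ is a one-element family, so it is trivially nested. Now fix any coloring of $P$ with $c$ colors. Since $|P|=c(k-1)+1$, some color class contains at least $k$ points. These $k$ points lie in the rectangle of $\Rc$ and form an ascending set, which is exactly what is required.

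The statement is meant to strengthen Theorem~\ref{thm:chenpach} and to generalize the hypergraph construction of Chekan and Ueckerdt. So the intended reading is almost certainly that $H(P,\Rc)$ is $k$-uniform: every rectangle of $\Rc$ contains \emph{exactly} $k$ points of $P$, and these points are ascending. If that is the intended statement, I would prove it by induction on $c$, with $k$ fixed, using the construction below.

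\emph{Base case.} For $c=1$, take $k$ points on the diagonal and one rectangle around them.

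\emph{Inductive step.} Let $(P_c,\Rc_c)$ be a configuration that is not properly $c$-colorable. Build $(P_{c+1},\Rc_{c+1})$ as follows.
\begin{itemize}
\item Take an ascending chain of $k$ ``anchor'' points in a thin horizontal strip.
\item Inside the region associated with each anchor, place a scaled copy of $(P_c,\Rc_c)$. Put the copies in pairwise disjoint horizontal strips, so that the $y$-projections of their rectangles stay nested inside distinct strips.
\item For each $(k-1)$-subset of points lying in different copies, together with a suitable anchor, add a rectangle containing exactly those $k$ ascending points.
\end{itemize}

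The coloring argument then runs like this. If some copy uses at most $c$ of the $c+1$ colors, the induction hypothesis makes one of its rectangles monochromatic. Otherwise every copy uses every color, and in particular the anchor's color. Picking points of that color from the copies produces a monochromatic added rectangle.

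The main obstacle is the geometric realization of the added rectangles. Each must contain exactly the $k$ chosen points and nothing else, while all the $y$-projections remain nested. To handle this, I would make the copies successively much smaller and shifted both up and to the right, so that an ascending selection can be cut out by a rectangle. The required structure is a staircase inside a common $y$-interval, with the copies' own $y$-projections nested inside it.
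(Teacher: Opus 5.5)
\textbf{The first part.} Your pigeonhole argument is correct for the literal wording, but that reading is vacuous: Theorem~\ref{thm:chenpach} read the same way would be trivial too. What is meant, and what the paper proves by realizing $H_k^c$ exactly, is that $R\cap P$ itself is a monochromatic set of $k$ ascending points.

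\textbf{The gap.} For that statement your induction has a genuine gap, and it is not a missing geometric detail. The hypergraph you describe cannot be realized by rectangles with ascending edges at all, nested or not. Your coloring step needs a fixed anchor $a$ for which \emph{every} transversal $\{a,x_1,\dots,x_{k-1}\}$ of $k-1$ copies equals $R\cap P$ for some $R\in\Rc$, because you cannot control where $a$'s color occurs in each copy. Write $p<q$ if $p$ is below and to the left of $q$.
\begin{itemize}
\item For $k\ge 3$, take two such copies $C_1,C_2$. Each copy contains an edge, so there are comparable points $x<x'$ in $C_1$ and $y<y'$ in $C_2$. Every $v\in C_2$ lies in an added edge with $x$ and in another with $x'$. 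These edges are ascending, so $v$ is comparable to both $x$ and $x'$.
\item If $v>x'$, the rectangle realizing the added edge through $x$ and $v$ also contains $x'$. If $v<x$, the rectangle for $v$ and $x'$ contains $x$. Either way an added edge picks up a second point of $C_1$.
\item Hence $x<v<x'$ for every $v\in C_2$, and symmetrically $y<u<y'$ for every $u\in C_1$. Taking $v=y$ and $u=x$ gives $x<y<x$, a contradiction.
\item Your suggested placement, each copy above and to the right of the previous one, breaks at once: a rectangle through $x$ and any point of a later copy contains $x'$.
\item For $k=2$, the anchor together with an edge of its copy forms a triangle. But ascending $2$-edges cut out by rectangles are edges of a Hasse diagram, which is triangle-free.
\end{itemize}

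\textbf{The missing idea.} The paper never needs all transversals. It builds a forest of $m^k$ trees, with $m=|V(H_k^{c-1})|$, whose vertices are grouped into stages, one horizontal line per stage. Only two kinds of edges are realized geometrically.
\begin{itemize}
\item Path edges: each child is placed below and slightly to the left of its parent, with siblings descending. Then the rectangle with corners $v$ and $root(v)$ cuts out exactly $path(v)$.
\item Transversal edges: disjoint copies of $H_k^{c-1}$ on blocks of $m$ consecutive vertices of a stage, inserted as very thin realizations around the stage line.
\end{itemize}
The uncertainty about where a color sits is handled combinatorially rather than geometrically. For \emph{every} choice $S'\in f_m(S)$ of one vertex per block there is a separate child stage $T(S')$. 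So a fixed color can be followed from level $0$ down to a monochromatic path edge, and the geometry only ever has to realize tree paths. Nestedness then comes almost for free: all path rectangles share their top $y$-endpoint, and each inserted copy of $H_k^{c-1}$ occupies its own thin horizontal strip.
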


Finding graphs and hypergraphs that have large girth and chromatic number is a classical topic, going back to Tutte \cite{D54}  (under the pseudonym Blanche Descartes) and Erdős \cite{E59}. Large-girth constructions were recently found in some geometric problems; see, for example, \cite{davies2020box,davies2024solution}. Using a graph construction from \cite{kostochka1999properties} we show a large-girth variant of Theorem \ref{thm:chenpach} in the graph case $k=2$.

 \begin{theorem}\label{thm:girtthm}
     For any constants $c,g\ge1$, there exists a finite set $P$ of points and a set $\Rc$ of axis-parallel rectangles in the plane such that $H(P,\Rc)$ is a graph, has girth at least $g$ and chromatic number at least $c+1$.
 \end{theorem}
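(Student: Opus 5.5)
The plan is to realize a recursive Tutte/Descartes-type construction geometrically, and to use the analysis of Kostochka and Nešetřil \cite{kostochka1999properties} for girth and chromatic number, so that the only new work is geometric. Recall the construction. Let $G_1$ be a single vertex. Given $G_c$ on $m$ vertices, fix an $m$-uniform hypergraph $\mathcal{H}_c$ with girth at least $g$ and chromatic number larger than $c$ (so in particular $m>c$) on a vertex set $S_c$. Then $G_{c+1}$ consists of:
\begin{itemize}[topsep=3pt, itemsep=0pt]
\item the set $S_c$, as an independent set;
\item one disjoint copy $G^e$ of $G_c$ for every edge $e\in E(\mathcal{H}_c)$;
\item a perfect matching between $V(G^e)$ and $e$, for every $e$.
\end{itemize}
Any $c$-coloring of $G_{c+1}$ gives a monochromatic edge $e$ of $\mathcal{H}_c$. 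Then $G^e$ avoids that color, so it is $(c-1)$-colored, which contradicts $\chi(G_c)\ge c$. Hence $\chi(G_{c+1})\ge c+1$. Kostochka and Nešetřil show that this construction preserves girth at least $g$. I will take both facts as given.

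It therefore suffices to prove, by induction on $c$, that $G_c$ is realizable as $H(P,\Rc)$. Here every rectangle of $\Rc$ contains exactly two points, and the point pairs cut out by the rectangles are exactly the edges. I will strengthen the induction hypothesis to an invariant that makes gluing possible: $G_c$ has a realization inside the unit square in which every point $p$ sees the top side freely. That is, the vertical segment from $p$ up to $y=1$ contains no other point of $P$ and meets no rectangle of $\Rc$ except rectangles that contain $p$ itself. I also require all $x$- and $y$-coordinates to be distinct.

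For the inductive step, place the points of $S_c$ far above, on a descending sequence $s_1,\dots,s_N$ with $x(s_1)<\dots<x(s_N)$ and $y(s_1)>\dots>y(s_N)$. For each edge $e$ of $\mathcal{H}_c$, shrink the realization of $G_c$ to a tiny box $B_e$ and place it below all of $S_c$. The boxes are placed in pairwise disjoint vertical slabs, and each box lies strictly below the boxes of earlier edges, so the boxes form a descending staircase as well. Then we need a rectangle for each matching edge $v\,s_j$. Take the rectangle spanned by $v$ and $s_j$, thinned to width comparable to the gap between $v$ and $s_j$ in $x$. The key point is that a matching edge can be drawn without containing other points if, in the $x$-order, $v$ is next to $s_j$ relative to all points in the vertical range between them. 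To arrange this, I will permute the copy so that its vertices matched to $s_{j_1},\dots,s_{j_m}$ (with $j_1<\dots<j_m$) appear in the same left-to-right order. The copy $G^e$ is realized affinely, and the realization of $G_c$ can be pre-composed with an $x$-permutation only if the rectangles survive that permutation. Rather than permuting, I will therefore use the free upward segments. Each $v\in G^e$ is joined by a thin vertical corridor up to a height just below $S_c$, followed by a thin horizontal corridor to $s_j$. Because $x$- and $y$-coordinates are distinct, one axis-parallel rectangle of tiny width whose corner is at $v$ can serve if $s_j$ lies directly above $v$'s corridor. To get this, I choose the $x$-coordinate of $s_j$ inside the narrow $x$-window of $v$. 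This forces each $s_j$ to belong to only one copy, which is wrong in general. So this direction needs the staircase ordering, plus horizontal offsets chosen so that the rectangles of different copies sharing $s_j$ avoid one another's points.

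The main obstacle is exactly this step. The vertices of $S_c$ have large degree in $\mathcal{H}_c$, and all the rectangles incident to a single $s_j$ must avoid every other point. I will try to handle it as follows. Order the edges $e_1,e_2,\dots$ of $\mathcal{H}_c$ and place $B_{e_t}$ lower and further right as $t$ increases. Each rectangle from $B_{e_t}$ to $s_j$ is then a tall thin box whose upper part lies to the right of all earlier boxes and above all later boxes. Only points of $S_c$ can enter it, and we exclude those by making $s_j$ a corner of the rectangle, using the descending order of $S_c$. The invariant is then re-established for $G_{c+1}$: points of $S_c$ are the highest, and copy vertices keep free upward paths. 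For this I must shift the rectangle corridors slightly left of each $v$'s segment. Verifying this bookkeeping is where I expect most of the effort.
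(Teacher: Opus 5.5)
Your combinatorial part matches the paper's $G^c(g)$. The gap is in the one part you do not cite: the geometric realization, which you leave as the unresolved ``main obstacle''. The layout you settle on cannot work. An axis-parallel rectangle containing $v\in B_e$ and $s_j$ must contain the whole bounding box of $\{v,s_j\}$. So it cannot be ``thinned'', and vertical or horizontal corridors do not help, since a rectangle is not an L-shaped path. Your claim that each rectangle from $B_{e_t}$ to $s_j$ is a tall thin box requires $x(s_j)$ to lie in the slab of $B_{e_t}$. The slabs are pairwise disjoint, and $s_j$ has a partner in every copy $G^e$ with $e\ni s_j$. This is the problem you noticed and did not solve. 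For any copy whose slab misses $x(s_j)$, the rectangle from $v$ to $s_j$ contains every point of $B_e$ above $v$ on the side of $s_j$. So it picks up other vertices of $G^e$ unless $v$ happens to be extremal in $B_e$ in that direction. The free-upward-segment invariant is never established, and it would not prevent this either.

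The missing idea is to let $S_c$ dictate the layout of each copy, not the other way round. The chromatic-number and girth arguments work for \emph{any} bijection between $e$ and $V(G^e)$, so choose it order-preserving: match the $i$-th point of $e$ in $x$-order to the $i$-th point of the realization of $G_c$ in $x$-order. You considered this and discarded it, but it is the matching you permute, not the realization. Strictly increasing maps applied separately to the $x$- and $y$-coordinates preserve all rectangle incidences. So stretch each copy horizontally so that every vertex sits just left of its partner, and squash it vertically into a thin horizontal band below $S_c$, with a different band for each hyperedge. Inside the window between $s\in S_c$ and the preceding point of $S_c$, arrange the children of $s$ (one per band of an edge containing $s$) as a descending staircase. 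Then the rectangle with corners $v$ and $s$ contains only $v$ and $s$. Every other point either has its $x$-coordinate outside the window, or is another child of $s$ that is lower than $v$, or higher and to the left of $v$. The rectangles of each copy stay inside its band. This is exactly the paper's route. The order-preserving matching is built into the construction of $G^c(g)$, since the copy in $T(S)$ respects $<_{T(S)}$. The realization is Phases~1 and~2 of Lemma~\ref{lem:realize} applied to the two-level stage structure.
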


\subsection{Arithmetic progressions and a question of Pálvölgyi}

One of the most famous coloring results in combinatorial number theory is van der Waerden’s theorem  \cite{vanderWaerden1927}. It says that for any constants $c,k\ge1$ there is an $n$ such that if we color the integers from 1 to $n$ using $c$ colors, then we always find a monochromatic arithmetic progression of length $k$. 

Pálvölgyi proposed the following variant (see Problem 9.11 in  \cite{damasdi2025coloring}).
For $D\subset \Nb$, let $\Ac_D$ denote the family of all finite arithmetic progressions whose difference is in $D$. Instead of all arithmetic progressions, we will only consider arithmetic progressions from $\Ac_D$. On the other hand, we allow for restrictions on the base set. Formally, for a base set $V\subset \Zb$ and a family $\Ac$ of finite arithmetic progressions, let $H(V,\Ac)$ denote the incidence hypergraph on vertex set $V$ whose edge set is $\{A\cap V \mid A\in \Ac\}$. If the members of $\Ac$ belong to $\Ac_D$, then we say that $H(V,\Ac)$ is \emph{realized} by arithmetic progressions whose difference is in $D$.

This setting is clearly analogous to the problem of axis-parallel rectangles. Instead of a planar set of points, we have a set of integers. Instead of axis-parallel rectangles, we have arithmetic progressions (by pure coincidence, both can be abbreviated as AP). Again we are interested in the coloring properties of these hypergraphs. 

\begin{problem}[Pálvölgyi, Problem 9.11 in  \cite{damasdi2025coloring}]
    For which $D\subset \Nb$ and $k\ge1$ can we realize $k$-uniform hypergraphs of large chromatic number using arithmetic progressions whose difference is in $D$?
\end{problem}

Pálvölgyi noted that for finite $D$ the chromatic number is bounded and depends only on the divisibility lattice of $D$. On the other hand, if $D$ is infinite, almost nothing is known; see \cite{bursics} for partial results.
Pálvölgyi also pointed out that the case $D=\{2^i|i\in \Nb \}$ shows a connection to geometric hypergraphs, as we can realize all finite hypergraphs that are realizable by points and bottomless rectangles in the plane (see the discussion at \cite{domotorpMO451443}). We show that the connection is much stronger. In fact,  this case is equivalent to realizing hypergraphs using axis-parallel rectangles whose $y$-projection is nested.

\begin{theorem}\label{thm:equiv}
A hypergraph is realizable by a family of axis-parallel rectangles whose $y$-projections form a nested family if and only if it is realizable by arithmetic progressions whose difference is a power of 2.    
\end{theorem}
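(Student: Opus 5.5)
Both directions go through one encoding: integers written in binary correspond to points of the plane, with the low-order bits playing the role of the $y$-coordinate and the high-order part the role of the $x$-coordinate. The basic observation is that an arithmetic progression with difference $2^i$ is exactly a set of integers sharing the same residue modulo $2^i$ that lie in an interval. The residue classes modulo powers of $2$ form a nested family, namely the dyadic tree: the class $r \bmod 2^i$ splits into $r \bmod 2^{i+1}$ and $r+2^i \bmod 2^{i+1}$. Intervals of a nested family are likewise organized in a tree. The plan is to match these two trees.

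\textbf{(AP $\Rightarrow$ rectangles).} Let $V\subset\Zb$ be finite and let $\Ac$ consist of progressions with differences $2^{i}$, where $i\le m$. By translation we may assume $V\subset[0,2^m N)$. Map $v$ to the point $(v,\rho(v))$, where $\rho(v)$ is the bit-reversal of $v \bmod 2^m$, read as an $m$-bit integer. The residue class $r \bmod 2^i$ consists of those $v$ whose last $i$ bits are fixed. After reversal, these are the $v$ for which $\rho(v)$ has its top $i$ bits fixed, which is a dyadic interval $I_{r,i}$ of length $2^{m-i}$. Dyadic intervals form a nested family. A progression $\{a, a+2^i,\dots,a+t2^i\}$ meets $V$ in $V\cap[a,a+t2^i]\cap\{v\equiv a \bmod 2^i\}$, and this is exactly the set of points of $V$ in the rectangle $[a,a+t2^i]\times I_{a \bmod 2^i,i}$. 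So the hypergraph is realized by rectangles with nested $y$-projections.

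\textbf{(Rectangles $\Rightarrow$ AP).} Let $P$ be finite and let $\Rc$ have nested $y$-projections. By perturbing, we may assume the points have distinct coordinates and that the rectangle sides avoid them, without changing any incidences. Then only the order of the coordinates matters, so all coordinates can be taken to be integers. Replace each $y$-projection by its set of $y$-values of points of $P$. The nested family of these sets (together with the whole set) forms a laminar family on the $y$-values, and hence a rooted tree. Make it binary by refining, and then embed it into the dyadic tree. That is, assign each point $p$ a residue $s(p)\in[0,2^m)$ such that every interval $J$ of the laminar family corresponds to one dyadic node. In other words, there is $(r_J,i_J)$ such that $p_y\in J$ iff $s(p)\equiv r_J \bmod 2^{i_J}$.

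This embedding is the main technical step. It is done by induction down the tree: each node with $d$ children gets $\lceil\log_2 d\rceil$ extra levels, and its children are placed on distinct descendants at a common depth. Leaves are the individual $y$-values, and distinct points get distinct residues. Finally set $v(p)=2^m p_x + s(p)$. For a rectangle $[x_1,x_2]\times J$, take the progression with difference $2^{i_J}$ that starts at the smallest integer $\ge 2^m x_1$ congruent to $r_J$, and ends just below $2^m(x_2+1)$. It contains $v(p)$ iff $x_1\le p_x\le x_2$ and $s(p)\equiv r_J \bmod 2^{i_J}$, which is exactly the condition $p\in R$.

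The main care points are the following. First, the dyadic embedding must be made uniform in depth, which is done by padding with extra levels. Second, the perturbation step must preserve incidences. This holds because $P$ and $\Rc$ are finite, so strict inequalities can be secured.
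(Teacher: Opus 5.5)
Your proposal is correct and follows essentially the same route as the paper. Your bit-reversal $\rho(v)$ equals $2^m a_{v \bmod 2^m}$, a truncation of the van der Corput map the paper uses; your encoding $v(p)=2^m p_x+s(p)$ is the paper's $v_i=\sum_j s_j2^{j-1}+i\,2^{t-1}$ after extending to a perfect nested family, and your discretization to $y$-value sets with $\lceil\log_2 d\rceil$ padding levels in fact supplies the proof of Observation~\ref{obs:extend} that the paper omits (just take the sides $x_1,x_2$ to be integers, or replace them by $\lceil x_1\rceil,\lfloor x_2\rfloor$, before cutting the progression at $2^m(x_2+1)$).
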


The connection relies on a mapping that uses the van der Corput sequence. We also extend one direction to arbitrary infinite difference sets.

\begin{theorem}\label{thm:general}
Suppose $D\subset \Nb$ is an infinite set. If a hypergraph is realizable by a family of axis-parallel rectangles whose $y$-projection is nested, then it is realizable by arithmetic progressions whose difference is in $D$.    
\end{theorem}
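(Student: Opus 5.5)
The plan is to encode the $y$-structure by congruences and the $x$-structure by the order of the integers. Let the hypergraph be realized by a finite point set $P$ and rectangles $\Rc$ whose $y$-projections form a nested family $\Ic$. By a small perturbation we may assume all coordinates are distinct. We may also close $\Ic$ under nothing new: it is already laminar, so it forms a rooted forest under inclusion.

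We will assign to every $I\in\Ic$ a difference $d_I\in D$ and a residue $r_I$ modulo $d_I$. To every $p\in P$ we will assign an integer $n(p)$ such that:
\begin{itemize}
\item[(i)] $n$ is strictly increasing in the $x$-coordinate;
\item[(ii)] for every $I\in\Ic$, $n(p)\equiv r_I \pmod{d_I}$ holds if and only if $y(p)\in I$.
\end{itemize}
Given this, take a rectangle $R=J\times I\in\Rc$. Let $a$ and $b$ be the smallest and largest values of $n$ over the points of $R$ (if $R$ contains no points, there is nothing to realize). Consider the arithmetic progression with difference $d_I$ consisting of all integers in $[a,b]$ that are $\equiv r_I \pmod{d_I}$. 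By (i), the points with $n(p)\in[a,b]$ are exactly those whose $x$-coordinate lies between the extreme $x$-coordinates of $R\cap P$, which are exactly those with $x(p)\in J$. By (ii), among these the progression picks out exactly those with $y(p)\in I$. So the progression meets $n(P)$ in exactly $n(R\cap P)$, and $n$ is the required realization.

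To build the data, first choose the differences. Order $\Ic$ compatibly with the forest, so that parents come before children, and pick distinct $d_I\in D$ along this order, each enormous compared to everything chosen before (for instance $d_I > 2^{|\Ic|}\cdot|P|\cdot\prod_{I'\text{ earlier}} d_{I'}$). This is possible because $D$ is infinite. Next choose the residues top-down. For a root, $r_I$ is arbitrary. For a child $I'$ of $I$, choose $r_{I'}$ subject to two requirements:
\begin{itemize}
\item it must be consistent with $r_{I''}$ modulo $\gcd(d_{I'},d_{I''})$ for every ancestor $I''$ of $I'$, so that the congruences along each chain have a common solution by CRT;
\item its class must avoid, as far as possible, being forced by the classes of earlier non-ancestor intervals.
\end{itemize}
Because $d_{I'}$ is huge relative to the earlier moduli, the compatible residues form a class modulo the lcm of the ancestor moduli. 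That class has many representatives modulo $d_{I'}$, so we have enough freedom.

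Then define $n(p)$ greedily in increasing $x$-order. Let $I_1\supset\dots\supset I_m$ be the chain of intervals containing $y(p)$. The positive conditions define a residue class modulo $L=\operatorname{lcm}(d_{I_1},\dots,d_{I_m})$, which is nonempty by the choice of residues. We must pick an element of this class larger than the previous value that avoids $r_J \pmod{d_J}$ for every other $J$. Since the class contains infinitely many large integers, it suffices to show that the forbidden classes cover only a relative density $<1$ of it. The relative density of the class of $J$ inside the class mod $L$ is at most $\gcd(L,d_J)/d_J$.

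The main obstacle is exactly this density estimate, in the case where $d_J$ divides (or shares most of its size with) $L$. This can happen when $J$ is an earlier interval disjoint from $I_m$ but some later $d_{I_i}$ happens to be a multiple of $d_J$, because we have no control over divisibility inside $D$. If $J$ is a descendant of $I_m$, it comes later in the order, $d_J\gg L$, and its contribution is tiny. For the problematic $J$, which are disjoint from the chain, my first attempt is to use the residue freedom. When choosing $r_{I_i}$ for a chain element, additionally require its class to be incompatible with the classes of all earlier disjoint intervals modulo the relevant gcd, whenever those gcds are nontrivial. If the gcd is small, the density bound already works. If the gcd is large, we have many choices for $r_{I_i}$ and can rule out the bad ones by counting.

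If this bookkeeping becomes too messy, a fallback is to choose the $d_I$ more cleverly from $D$. Pass to an infinite subsequence of $D$ on which each element does not divide the lcm of any finite collection of the others, or on which gcds are controlled; a pigeonhole or Ramsey-type argument on prime factorizations should give this. Then run the same construction. Under such control every forbidden class has relative density at most $\gcd(L,d_J)/d_J\le 1/(2|\Ic|)$, and the greedy step succeeds.
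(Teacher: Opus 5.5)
Your reduction to (i) and (ii) is sound: the progression of integers in $[a,b]$ congruent to $r_I$ modulo $d_I$ does cut out exactly $n(R\cap P)$. The gap is the construction of $n$ with (ii), which you flag as the main obstacle and leave open. Specifically, you need to avoid the classes $r_J \pmod{d_J}$ of intervals $J$ that precede $I_m$ in your order but do not contain $y(p)$, and neither proposed fix works as stated.

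\begin{itemize}
\item \textbf{The fallback is false.} For $D=\{2^i : i\in\Nb\}$, every infinite subsequence is a divisibility chain. So for every earlier $J$ you get $d_J\mid d_{I_m}\mid L$ and $\gcd(L,d_J)/d_J=1$. No thinning of $D$ can give $\gcd(L,d_J)/d_J\le 1/(2|\Ic|)$.
\item \textbf{The first attempt's dichotomy breaks.} Enforcing incompatibility at step $I_i$ can be impossible, because the ancestors may already fix $r_{I_i}$ modulo $\gcd(d_{I_i},d_J)$. Suppose $D$ supplies $d_{I_1}=2^{10}$, $d_J=2^{10}3^{30}$ for some $J\subset I_1$, and $d_{I_2}=2^{10}5^{100}$ for some $I_2\subset I_1$ disjoint from $J$, in the order $I_1,J,I_2$. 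The gcd $2^{10}$ is large. Yet every $r_{I_2}$ consistent with $r_{I_1}$ is compatible with $r_J$. The density here, $3^{-30}$, happens to be tiny, but you give no argument that this always happens.
\end{itemize}

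What is missing is an inheritance statement. The admissible choices of $r_{I_i}$ split the class of the chain $I_1,\dots,I_{i-1}$ modulo $L'$ into $L/L'$ equal subclasses modulo $L$. Hence the average, over these choices, of the relative density of $J$ equals its relative density in the parent class. You would then pick $r_{I_i}$ so that the sum of densities over all earlier non-ancestors $J$ is at most that average. Each $J$ enters the sum with density at most (lcm of the moduli of the chain intervals preceding $J$)$/d_J$, which is tiny. Without something like this, the greedy step is unjustified.

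The paper avoids divisibility issues altogether by a different structural choice. It completes $\Ic$ to a perfect binary nested family and uses one modulus $d_m$ for every interval at depth $m$, with $d_{m+1}>2^m\lcm(d_1,\dots,d_m)$. Lemma~\ref{lem:enough} then gives at least $2^m$ CRT-compatible residues for each extension. So the residues $r_{s_1\cdots s_m}$ can be chosen pairwise distinct at each depth and solvable along every root-to-leaf path. A point in leaf $s$ gets $v_i=f_s+i\lcm(d_1,\dots,d_{t-1})$, so $v_i\equiv r_{s_1\cdots s_r}\pmod{d_r}$. Hence $v_i\not\equiv r_{q_1\cdots q_r}\pmod{d_r}$ automatically for every other depth-$r$ interval $q$. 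Exclusion is exact, and no density estimate is needed.
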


Theorem \ref{thm:ascending} and Theorem \ref{thm:general} immediately imply the following resolution of Pálvölgyi's problem.

\begin{theorem}
    For any infinite $D$ and any constants $c,k\ge1$ there exists a set $V\subset \Zb$ such that for any $c$-coloring of $V$ there is a finite arithmetic progression $A$ whose difference is in $D$ such that $A\cap V$ is monochromatic and has size at least $k$.
\end{theorem}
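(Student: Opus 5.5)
The statement follows by combining Theorem \ref{thm:ascending} with Theorem \ref{thm:general}. The only work is to check that the hypergraph transferred from the plane to $\Zb$ still has the required coloring property.

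\textbf{Step 1.} Fix an infinite $D$ and constants $c,k\ge1$. Theorem \ref{thm:ascending} gives a finite point set $P$ and a family $\Rc$ of axis-parallel rectangles whose $y$-projections form a nested family, such that every $c$-coloring of $P$ has a rectangle $R\in\Rc$ containing $k$ points of one color.

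\textbf{Step 2.} Pass to a hypergraph whose edges are exactly the monochromatic-capable sets. Let $H=H(P,\Rc)$, with vertex set $P$ and edges $P\cap R$ for $R\in\Rc$. Then every $c$-coloring of $P$ leaves some edge containing at least $k$ vertices of a single color. Since $H$ is realized by rectangles with nested $y$-projections, Theorem \ref{thm:general} gives a set $V\subset\Zb$ and a family $\Ac\subseteq\Ac_D$ with $H(V,\Ac)\cong H$.

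\textbf{Step 3.} Pull a coloring back through the isomorphism. Take any $c$-coloring of $V$ and transport it to $P$ via the bijection. By Step 1 some edge $P\cap R$ contains $k$ points of the same color. Its image is $A\cap V$ for some $A\in\Ac$, and this set contains $k$ vertices of that color.

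\textbf{The main subtlety.} The statement asks for $A\cap V$ to be \emph{monochromatic} with size at least $k$, not merely to contain $k$ points of one color. An edge $P\cap R$ may contain points of other colors as well. There are two ways to handle this.

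The first is to read off from the proof of Theorem \ref{thm:ascending} that the edges can be taken $k$-uniform. Each rectangle can be shrunk to contain exactly the $k$ ascending points: the bounding box of $k$ ascending points is a rectangle, and in the explicit construction it contains no other point of $P$. One must also check that the $y$-projections stay nested. This is plausible because the construction is of Chekan--Ueckerdt type, so nestedness is preserved.

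The second, preferable if the shrinking is awkward, is to pass to a subhypergraph. Since Theorem \ref{thm:general} applies to any hypergraph realizable with nested $y$-projections, it suffices to find such a realization in which every $k$-subset of an edge is itself an edge. Concretely, for every rectangle $R\in\Rc$ and every ascending $k$-subset $S$ of $P\cap R$, add the bounding box of $S$. This would need a general-position and nestedness check, and verifying it is where I expect the real work.

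With $k$-uniform monochromatic edges available, Step 3 produces an $A\in\Ac_D$ with $A\cap V$ monochromatic of size $k$, which proves the theorem.
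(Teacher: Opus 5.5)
Your skeleton matches the paper's, which says the statement follows immediately from Theorem \ref{thm:ascending} and Theorem \ref{thm:general}. You are also right that the literal wording of Theorem \ref{thm:ascending} (a rectangle containing $k$ points of one color) does not by itself make $A\cap V$ monochromatic. However, neither of your fixes closes this as written, and the missing observation is simpler than both. The point set and rectangles built in the proof of Theorem \ref{thm:ascending} (Lemmas \ref{lem:realize} and \ref{lem:nested}) realize the hypergraph $H_k^c$ itself, i.e.\ $H(P,\Rc)\cong H_k^c$, with nested $y$-projections. $H_k^c$ is $k$-uniform: path edges have $k$ vertices, and transversal edges are edges of copies of $H_k^{c-1}$, which is $k$-uniform by induction. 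It also has no proper $c$-coloring. So every $P\cap R$ already has exactly $k$ points, and a rectangle with $k$ points of one color is a monochromatic edge. Theorem \ref{thm:general} then gives $H(V,\Ac)\cong H_k^c$ with $\Ac\subseteq\Ac_D$. Pulling back any $c$-coloring of $V$ yields a monochromatic $A\cap V$ of size $k$. No shrinking and no new nestedness check are needed.

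Your first route does unnecessary work, and the step you call plausible is not automatic. In Lemma \ref{lem:nested}, the path rectangles are nested because they share a common top $y$-endpoint. After Phase 2, however, the roots sit at slightly different heights. The shrunken boxes therefore have $y$-projections $[y(v),y(root(v))]$, which need not be nested.

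Your second route fails. The bounding box of a $k$-subset $S\subseteq P\cap R$ generally contains further points of $P\cap R$. For example, if $P\cap R$ contains $k+1$ ascending points and $S$ omits a middle one, the box of $S$ contains that point. So the added rectangles do not realize $S$ as an edge, and the requirement that every $k$-subset of an edge be an edge cannot be met this way.
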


\subsection{Coloring Hasse diagrams of posets}
For a partially ordered set $(P,<)$ its Hasse diagram is the graph on vertex set $P$ where $u<v$ is an edge if there is no $w\in P$ such that $u<w<v$. As Hasse diagrams are triangle-free, it is interesting to consider their coloring properties.  Erdős and Hajnal constructed posets whose Hasse diagrams have large chromatic number \cite{erdHos1964some}. Recently Suk and Tomon \cite{suk2021hasse} constructed Hasse diagrams with $n$ vertices and chromatic number $\Omega(n^{1/4})$.

 K{\v{r}}{\'\i}{\v{z}} and Ne{\v{s}}et{\v{r}}il showed that there are 2-dimensional posets such that their Hasse diagram has large chromatic number \cite{kvrivz1991chromatic}. A standard way to produce posets from a set of planar points is to consider the ordering $p < q \Leftrightarrow x(p)<x(q)$ and $y(p)<y(q)$. This gives a 2-dimensional poset, and every $2$-dimensional poset can be represented this way. Note that $\{p,q\}$ is an edge of the Hasse diagram of $(P,<)$ if and only if they form an ascending set and there is an axis-parallel rectangle $R$ such that $P\cap R=\{p,q\}$. Therefore, Theorem \ref{thm:ascending} implies the following strengthening of the result of K{\v{r}}{\'\i}{\v{z}} and Ne{\v{s}}et{\v{r}}il.

\begin{corollary}
    For any constants $k,c\ge 1$, there exists a 2-dimensional poset $P$ with the following property: for every coloring of $P$ with $c$ colors, there is a monochromatic increasing path in the Hasse diagram of $P$ of length $k$.
\end{corollary}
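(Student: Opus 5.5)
The plan is to derive the corollary directly from Theorem~\ref{thm:ascending}, applied with the same $c$ and with $k+1$ in place of $k$. (If one counts path length by vertices, then $k$ itself suffices.) This gives a finite point set $P$ and a family $\Rc$ of axis-parallel rectangles. First we perturb $P$ slightly so that all $x$- and $y$-coordinates are distinct. This keeps every containment $p\in R$ for $R\in\Rc$, after enlarging each rectangle by a tiny margin, and it keeps every ascending relation. With distinct coordinates, the order $p<q \Leftrightarrow x(p)<x(q),\ y(p)<y(q)$ is a 2-dimensional poset on $P$, and this is the poset we take.

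Now fix any $c$-coloring of $P$. Theorem~\ref{thm:ascending} gives a rectangle $R\in\Rc$ and ascending points $p_1<p_2<\dots<p_{k+1}$ in $R$, all of the same color. The task is to turn this into a monochromatic increasing path in the Hasse diagram. The natural candidate is the sequence of consecutive points $p_i,p_{i+1}$. The difficulty is that a pair $p_i<p_{i+1}$ need not be a cover relation, because other points of $P$, possibly of other colors, may lie in the box spanned by $p_i$ and $p_{i+1}$. This is the main obstacle.

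To deal with it, I would exploit freedom in how the $k+1$ monochromatic points are chosen. Among all ascending monochromatic chains inside $R$, pick the chain together with the color class $C$. Then consider the subposet $P\cap R$. A maximal chain in $P\cap R$ between $p_1$ and $p_{k+1}$ that refines $p_1<\dots<p_{k+1}$ is a Hasse path, but its intermediate vertices can have other colors. So a plain chain-refinement argument fails, and we need the construction itself.

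My fallback is therefore to reprove the statement by induction, in the style of the construction behind Theorem~\ref{thm:ascending}. The aim is a stronger invariant: the $k$ monochromatic ascending points found in $R$ are consecutive in the sense that the box spanned by $p_i$ and $p_{i+1}$ contains no other point of $P$. Equivalently, the monochromatic points form a Hasse path. I expect the explicit Chekan--Ueckerdt-type construction to place the rectangle's points on an ascending staircase with no other points in the empty boxes between consecutive ones. In that case each consecutive pair is covered, and the monochromatic ascending $k$-tuple in $R$ is automatically an increasing Hasse path of length $k-1$ (or $k$, if we apply the theorem with $k+1$).

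The key step to verify is exactly this emptiness property in the construction.
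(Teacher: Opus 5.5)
\textbf{There is a genuine gap.} You correctly isolate the one nontrivial point: consecutive monochromatic points $p_i<p_{i+1}$ must be cover relations. But you never prove it. The proposal ends with ``I expect\dots'' and ``the key step to verify is exactly this emptiness property.'' The fallback you suggest, re-proving everything by induction with a stronger invariant, is neither carried out nor needed.

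You are right that the literal wording of Theorem~\ref{thm:ascending} (``contains $k$ ascending points'') is too weak on its own. The missing fact is already in the paper. The rectangles come from Lemma~\ref{lem:realize}, which \emph{realizes} the $k$-uniform hypergraph $H_k^c$, i.e.\ $H(P,\Rc)=H_k^c$. So every $R\in\Rc$ has $|R\cap P|=k$. A monochromatic edge of $H_k^c$ (from the lemma in Section~\ref{sec:constr}) is therefore the whole set $R\cap P=\{p_1,\dots,p_k\}$, and this set is ascending.

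With that, emptiness is immediate and needs no further inspection of the construction:
\begin{itemize}
\item Suppose $w\in P$ satisfies $p_i<w<p_{i+1}$. Then $w$ lies in the box $[x(p_i),x(p_{i+1})]\times[y(p_i),y(p_{i+1})]$.
\item This box is contained in $R$, since $R$ is an axis-parallel rectangle containing both $p_i$ and $p_{i+1}$.
\item So $w\in R\cap P$, i.e.\ $w=p_j$ for some $j$. This is impossible for $j\ne i,i+1$, because the $p_j$ are ascending.
\end{itemize}
Hence $p_1p_2\cdots p_k$ is a monochromatic increasing path in the Hasse diagram. If length counts edges, apply the construction with $k+1$. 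This is exactly the paper's one-line argument: $\{p,q\}$ is a Hasse edge if and only if it is ascending and equals $P\cap R$ for some rectangle $R$.

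Two side remarks. First, the perturbation step is unnecessary, since the product order on any finite set of distinct points is already 2-dimensional. Second, as you phrase it, the perturbation only preserves containments $p\in R$. The emptiness argument needs $R\cap P$ preserved exactly. If you keep the perturbation, you must also ensure no new points enter the enlarged rectangles.
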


In Section \ref{sec:constr} we present a construction of a family of large-chromatic hypergraphs. In Section \ref{sec:georel} we show how to realize these using rectangles (Theorem \ref{thm:ascending}). Finally, in Section \ref{sec:arithmetic} we show how to reduce Pálvölgyi's question to the problem on axis-parallel rectangles (Theorems \ref{thm:equiv} and \ref{thm:general}).

\section{Constructing the hypergraph}\label{sec:constr}
In this section we define, for each $c,k\ge 1$ a $k$-uniform hypergraph $H_{k}^c$ such that every $c$-coloring of $H_{k}^c$ yields a monochromatic edge. As a warm-up, we recall a simpler construction, the $k$-ary tree hypergraph.

Suppose $T=(V,E)$ is a rooted tree. Let $H_T$ denote the following hypergraph on vertex set $V$. For each leaf, the vertices of the unique root-leaf path form an edge. We will call these the \emph{path edges}. For each non-leaf vertex $v$, the children of $v$ form an edge, these are the \emph{sibling edges}. 

We say that a rooted tree is of \emph{depth $k$} if each root-leaf path contains exactly $k$ vertices. If $T$ is the tree of depth $k$ where each non-leaf vertex has $k$ children, then $H_T$ is a $k$-uniform hypergraph, known as the \emph{$k$-ary tree hypergraph}. It is easy to see that $H_T$ is not 2-colorable for any $T$. Indeed, suppose that we have a 2-coloring of the vertices. We can either follow the root's color down a path edge, or we get stuck at a vertex whose children form a monochromatic sibling edge.

The $k$-ary tree hypergraphs played an important role in a number of problems on geometric hypergraphs, see \cite{abab,DP20,PTT09} and Section 2.2 in \cite{damasdi2025coloring}. Unfortunately, if $k$ is large enough, then they cannot be realized by axis-parallel rectangles \cite{kumar2017minimum}. The path edges are realizable on their own, but we cannot add the sibling edges. To remedy this problem, we modify the construction. The idea is to take many trees instead of just one. We realize the path edges in each tree and we will replace the sibling edges with a new type of edge called \emph{transversal edges}. Each transversal edge takes at most one vertex from each tree, and using them we will show that we can follow the color of the root down a path edge in at least one of our trees. The precise construction is as follows.

For each $c,k\ge 1$, we inductively construct a vertex-ordered $k$-uniform hypergraph $H_k^c$ of chromatic number at least $c+1$. As mentioned, the construction is a generalization of a construction by Chekan and Ueckerdt for 2-colors \cite{chekan2022polychromatic}, and we mostly follow their terminology. For $c=1$ we take $k$ vertices in arbitrary order and a single edge containing all of them. 

Let $k\ge 1$ and $c>1$ be fixed. By induction, $H_{k}^{c-1}$ exists. Let $m$ denote the number of its vertices. We start by building an auxiliary rooted forest consisting of $m^k$ rooted trees. The vertex set of $H_k^c$ will be partitioned into \emph{stages}, each stage contains at most one vertex from each tree. The vertices of a given stage will be at a fixed distance $j$ from the roots of their corresponding tree, that we will call the \emph{level} of the stage. Each stage $S$ of level $j$  has exactly $m^{k-j}$ vertices and  comes with a fixed linear ordering $<_S$ on these vertices.

\begin{figure}[!ht]
    \centering
    \includegraphics[width=0.7\linewidth]{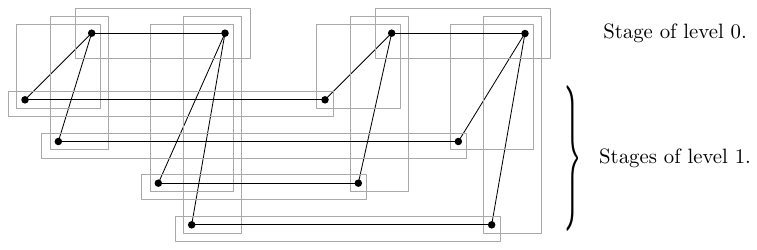}
    \caption{The hypergraph $H^2_{2}$ and its realization by axis-parallel rectangles.}
    \label{fig:h22}
\end{figure}

Furthermore, we regard the vertices of a stage as partitioned into $m^{k-j-1}$ disjoint \emph{blocks} of $m$ consecutive vertices in the ordering $<_S$. We will consider subsets of a stage that contain exactly one vertex from each block, hence the following definition will be useful. For an ordered set $A=\{a_1,a_2,\dots ,a_{tm}\}$  let $f_m(A)$ denote the family of subsets of $A$ that contain exactly one element from each block $\{a_{im+1},a_{im+2},\dots, a_{im+m}\}$,  $i=0,\dots, t-1$. 

We start building the forest by taking $m^k$ ordered vertices, they form the unique stage of level 0. They also serve as the roots of the $m^k$ trees. Suppose that we have already defined a stage $S$ of level $j<k-1$. Then for each subset $S'\in f_m(S)$ we define a new stage $T(S')$ of level $j+1$ on $m^{k-j-1}$ new vertices. Each vertex of $S'$ receives exactly one child in $T(S')$ and the vertices of $T(S')$ inherit the ordering of their corresponding parents. For stages of level $k-1$ the process stops. Hence, we obtain a forest of $m^k$ trees, whose vertices are partitioned into stages of level $0,1,\dots, k-1$. Let $V$ denote the vertex set of the forest; this will be the vertex set of $H_k^c$. For a vertex $v\in V$, let $root(v)$ denote the root of the tree that contains $v$. Let $path(v)$ denote the vertex set of the unique path from $v$ to $root(v)$.

Now we are ready to define the edges of the hypergraph $H_k^c$. There are two types of edges.  Let $\Ec_P=\{path(v)\mid \text{$v$ appears in a stage of level $k-1$}\}$, that is, for each vertex $v$ that appears in a stage of level $k-1$, add $path(v)$ as an edge to $H_k^c$. As before, these are called \emph{path edges}. 

Secondly, for each stage $S$ of level $j$ do the following. Recall that the vertices of $S$ are partitioned into $m^{k-j-1}$ disjoint blocks of $m$ consecutive elements in the ordering $<_S$. In each of these blocks, add edges to form a copy of the vertex-ordered hypergraph $H_k^{c-1}$ such that the vertex ordering of $H_k^{c-1}$ matches the restriction of $<_S$ to that block. That is, we realize $m^{k-j-1}$ disjoint copies of $H_k^{c-1}$ in the stage. The edges appearing in these will be called \emph{transversal} hyperedges, and the set of all transversal hyperedges appearing in any stage is denoted by $\Ec_T$. 

Finally we set $H_k^c=(V,\Ec_P\cup \Ec_T)$, see Figure \ref{fig:h22} for an example.

\begin{lemma}
    The $k$-uniform hypergraph $H_k^c=(V,\Ec_P\cup \Ec_T)$ is not properly $c$-colorable. 
\end{lemma}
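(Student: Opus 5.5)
We argue by induction on $c$. For $c=1$ the hypergraph is a single edge, so any $1$-coloring makes it monochromatic. Let $c>1$ and suppose $H_k^{c-1}$ is not properly $(c-1)$-colorable. Fix a $c$-coloring $\chi$ of $V$ and assume, for contradiction, that it has no monochromatic edge. The strategy is to follow colors down the forest, level by level, always keeping a block in which every vertex has a path to its root that is monochromatic in the root's color. Transversal edges are what let us continue from one level to the next.

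We maintain the following invariant for $j=0,1,\dots,k-1$. There is a stage $S_j$ of level $j$ such that for every vertex $v\in S_j$, all vertices of $path(v)$ have the color $\chi(root(v))$. For $j=0$ the invariant holds trivially, since every vertex of the root stage is its own root. Now suppose the invariant holds for $S_j$ with $j<k-1$.

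For the inductive step, consider each block $B$ of $S_j$. It carries a copy of $H_k^{c-1}$ formed by transversal edges. We want to select a vertex $v_B\in B$ that is not a leaf-level obstruction to descending. Here the selection is forced by the structure of the construction: every $S'\in f_m(S_j)$ has its own child stage $T(S')$, so we may choose any one vertex per block. We pick the vertices so that the children we reach keep their parents' colors. Then $S_{j+1}$ is the child stage $T(\{v_B\})$.

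This is where we need care, because a child may receive a different color. The right invariant is therefore stated on whole stages rather than on chosen representatives. We argue as follows. Take any block $B$ of $S_j$. If every vertex of $B$ had a path that is monochromatic with its root, and $B$ used only $c-1$ colors, then the inductive hypothesis on the copy of $H_k^{c-1}$ in $B$ would give a monochromatic transversal edge. Hence each block must realize all $c$ colors; in particular each block contains a vertex of every color.

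To use this, track a single target color at a time. Since the root stage has $m^k$ vertices in blocks, each block contains some vertex of color $1$. Choose such a vertex $v_B$ in each block. The next stage $T(\{v_B\})$ consists of children of color-$1$ vertices, and those children are again grouped into blocks carrying copies of $H_k^{c-1}$. Each of these blocks must contain a child of color $1$, so we choose those children. Repeating, we descend through levels $0,\dots,k-1$, always selecting, inside each block of the current stage, a vertex of color $1$ whose path consists entirely of color $1$. This is possible because the chosen vertex's parent was color $1$ by construction, and the block contains a color-$1$ vertex by the inductive hypothesis.

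At level $k-1$ we obtain a vertex $v$ with $path(v)$ entirely of color $1$. This path is a path edge in $\Ec_P$ and it is monochromatic, which is a contradiction. The main obstacle is making sure the block argument is applied to blocks of the current stage whose vertices all descend from chosen color-$1$ vertices. The construction guarantees this: the stage $T(S')$ consists exactly of the children of $S'$, and its blocks are contiguous in the inherited order. So the copies of $H_k^{c-1}$ sit inside sets of children of color-$1$ parents, and the inductive hypothesis applies to them.
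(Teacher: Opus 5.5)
Your final argument is correct and is essentially the paper's proof: fix one color, note that every block of the current stage must contain that color (otherwise the block would be a proper $(c-1)$-coloring of a copy of $H_k^{c-1}$), choose one such vertex per block, pass to the child stage of the chosen set, and at level $k-1$ obtain a monochromatic path edge. The abandoned first invariant (every vertex of $S_j$ having a monochromatic path), the claim that the choice makes children ``keep their parents' colors'', and the superfluous hypothesis about monochromatic paths in your block claim are unjustified or unnecessary, and you should delete them.
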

\begin{proof}
    We use induction on $c$. For $c=1$, we have a single edge; hence the hypergraph is not 1-colorable. For $c>1$ suppose that there is a proper $c$-coloring and fix a color, say red. We claim that each stage $S$ of level $j$ contains at least $m^{k-j-1}$ red vertices, one from each block of $m$ points. In other words, there is a red set in $f_m(S)$. Indeed, the transversal hyperedges form $m^{k-j-1}$ disjoint copies of $H_k^{c-1}$. By induction, each color must appear in each of these copies, hence we find a red vertex in each. 

    \begin{figure}[!ht]
        \centering
        \includegraphics[width=0.99\linewidth]{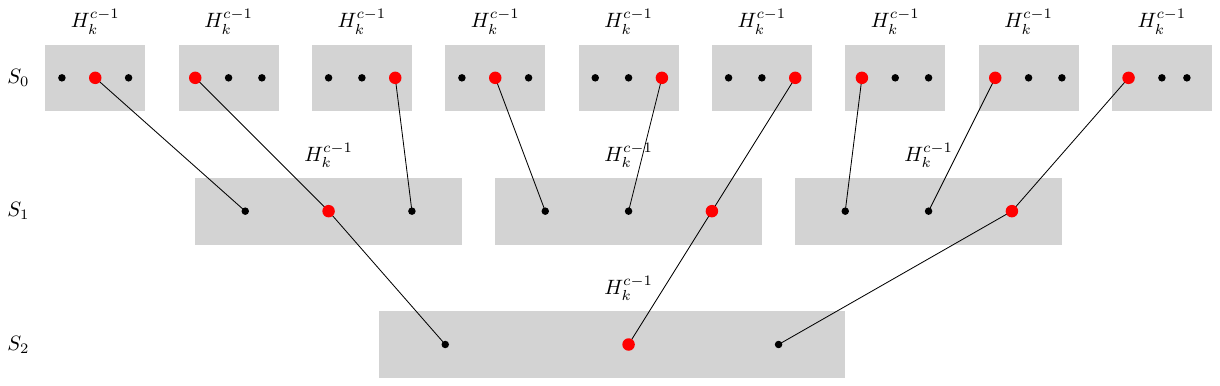}
        \caption{Finding a monochromatic path edge in the $k=3$ case. Grey rectangles indicate a copy of $H_k^{c-1}$, red vertices indicate the elements in $B_j$ in each stage.}
        \label{fig:structure}
    \end{figure}
    
    Using this observation and induction on $j$, we show that for each $j\in \{0,\dots, k-1\}$ there is a stage $S_j$ of level $j$ and a subset $B_j\in f_m(S_j)$ such that for each $v\in B_j$ the entire path $path(v)$ is red. See Figure \ref{fig:structure} for an example. For $j=0$, let $S_0$ be the unique stage at level 0. It consists of $m^{k-1}$ blocks, each forming a copy of $H_k^{c-1}$, we pick a red vertex from each to obtain $B_0$. Assume that we have found $B_j$ for some $j<k-1$. As $B_j\in f_m(S_j)$ there is a child stage $S_{j+1}=T(B_j)$. By the induction hypothesis, we can pick a red vertex in each block of $S_{j+1}$, giving us $B_{j+1}$. Each vertex of $B_{j+1}$ has its parent in $B_j$, so $path(v)$ is red for all $v\in B_{j+1}$.

    Finally, we find $B_{k-1}$ inside a stage of level $k-1$. It contains $m^{k-(k-1)-1}=m^0$ vertices, i.e., a single vertex $v$. Then $path(v)$ is monochromatic (red), a contradiction. Hence, there is no proper $c$-coloring of the hypergraph $H_k^c$. 
    
\end{proof}

\section{Geometric realization}\label{sec:georel}
We prove Theorem \ref{thm:ascending} in two steps. First, we show that there is a realization of $H_k^c$ by axis-parallel rectangles that contain ascending sets. Then we show how to achieve nested $y$-projection.

 For a point $p\in\R^2$, write $x(p),y(p)$ for its coordinates. A set $P=\{p_1,\dots,p_n\}$ ordered by $x$-coordinate  is \emph{ascending} if $y(p_1)<\dots<y(p_n)$ and \emph{descending} if $y(p_1)>y(p_2)>\dots>y(p_n)$. We say that \emph{$p$ is above $q$} if $y(p)> y(q)$ and use analogous definitions for \emph{below, to the right, and to the left}.

\begin{lemma}\label{lem:realize}
    For every $c,k\ge 1$, the hypergraph $H_k^c$ can be realized by axis-parallel rectangles such that each rectangle contains an ascending set of $k$ points. 
\end{lemma}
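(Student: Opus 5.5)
The plan is to strengthen the statement into an order-preserving form and prove that by induction on $c$. The claim: $H_k^c$, as a vertex-ordered hypergraph, has a realization by axis-parallel rectangles in which the $x$-order of the points is exactly the given vertex order, every edge is cut out by a rectangle, and every edge is an ascending $k$-set. The key fact I will use throughout is that whether a rectangle cuts out a given subset depends only on the $x$-order and the $y$-order of the points. So a realization may be moved by any maps that are monotone in each coordinate separately. In particular, I can squeeze a copy of a realization of $H_k^{c-1}$ into a very thin horizontal window while freely prescribing the $x$-positions of its points, as long as their $x$-order is kept. For $c=1$, take $k$ points on the diagonal and one rectangle containing all of them.

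For the inductive step I lay out the forest as follows.

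\textbf{Step 1 (the $x$-coordinates).} Roots get well-separated $x$-coordinates in the order of stage $0$. A child gets an $x$-coordinate slightly to the right of its parent, within a window of width $\varepsilon_j$ at level $j$, with $\varepsilon_0\gg\varepsilon_1\gg\cdots$. This ensures that all descendants of a vertex $u$ lie strictly between $x(u)$ and the next vertex of $u$'s own level and subtree-neighborhood. Since children inherit their parents' order, each stage $S$ then has $x$-order equal to $<_S$.

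\textbf{Step 2 (the $y$-coordinates of stages).} Each stage receives its own private thin horizontal window. Windows of level $j+1$ lie above all windows of level $j$. Inside its window, a stage carries $m^{k-j-1}$ copies of the inductive realization of $H_k^{c-1}$, one per block, compressed vertically. Their $x$-positions are the ones prescribed in Step 1, which is legitimate because the $x$-order there is $<_S$ restricted to the block, exactly as the strengthened hypothesis requires.

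\textbf{Step 3 (transversal edges).} A transversal rectangle is the inductive rectangle restricted to the stage's window. No other stage meets that window, so it picks up exactly the intended edge, and that edge is ascending by induction.

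\textbf{Step 4 (path edges).} For a leaf $v$, take $R_v=[x(root(v)),x(v)]\times[y(root(v)),y(v)]$. The path is ascending, since both coordinates increase going down the tree. The thin $x$-range contains only descendants of $root(v)$ that were placed to the left of $v$'s lineage, i.e.\ relatives branching off the path of $v$ at some ancestor $u$ and lying to the left of the next path vertex.

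The main obstacle is Step 4: choosing the vertical order of the stage windows within each level so that every such relative lies above $y(v)$ or below $y(root(v))$. My first attempt is a ``descending'' rule applied recursively. For a vertex $u$, order the child stages containing children of $u$ so that a child placed further left gets a higher window. Then force this, lexicographically, down the whole subtree: windows are compared first by level, and inside a level by a key that, for the branch leaving $v$'s path to the left, puts that branch above the entire path of $v$. Since the leaf $v$ is at the top level $k-1$, the offending relatives must then be pushed above level $k-1$ windows, which is impossible. The remedy I would try is therefore to drop the global ``level bands'' and instead nest windows recursively: the part of the picture living above $u$'s window in the $x$-interval of $u$'s descendants is organized in $y$ as a descending staircase of the sub-boxes of its child stages. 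The delicate point is that a stage contains vertices from many different trees, so its single window must be consistent with all these local staircases at once. Verifying that a consistent global $y$-order of the stage windows exists is where I expect the real work to be. The nested structure of the sets $f_m(S)$, where child stages are indexed by choices inside the parent stage, is what I would exploit, ordering the child stages of $S$ anti-lexicographically by the chosen transversal $S'$.
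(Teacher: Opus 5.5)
Your Steps 1--3 are fine. Stating the order-preserving strengthening explicitly is exactly right: the paper uses it tacitly when it drops a thin copy of $H_k^{c-1}$ onto a block. Since the paper never fixes a global vertex order on $H_k^c$, you should define it to be the $x$-order your construction produces.

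The gap is Step 4, which carries essentially all the geometric content of the lemma and is never carried out. You fix neither the $x$-order among the children of a vertex nor a $y$-order of the stage windows. You also do not prove that $R_v$ contains no vertex outside $path(v)$. You correctly show that global level bands fail, but after that you only sketch a staircase heuristic and say that the existence of a consistent global order is ``where the real work is''. As written, nothing prevents a relative of $v$ that branches off to the left at a low level from lying inside $R_v$.

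The missing idea is the paper's Phase 1, up to a $180^\circ$ rotation, and it also removes your consistency worry.

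\emph{Vertical layout.} Lay the stages out in $y$ by a depth-first (pre-order) traversal of the stage tree. When you process a stage $S$ with child stages $T_1,\dots,T_r$, taken in an \emph{arbitrary} order, put their windows in the gap just above $S$ (below the next window already placed above $S$, if any). Recursively, the whole subtree of stages under $T_i$ then sits between $T_i$ and $T_{i+1}$.

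\emph{Horizontal order of siblings.} The relative $x$-order of the children of a single vertex $u$ is a free choice, made inside $u$'s private $x$-window. It cannot affect $<_{T_i}$, because $u$ has at most one child in each $T_i$ and distinct parents have disjoint windows. Choose it so that the children of $u$ form a descending set, i.e., the reverse of the $y$-order of their stages. No anti-lexicographic rule on $f_m(S)$ is needed.

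\emph{Verification.} Let $w\notin path(v)$ lie in the tree of $v$, let $p$ be the deepest common ancestor of $v$ and $w$, and let $v',w'$ be the children of $p$ towards $v$ and $w$.
\begin{itemize}
\item If $w'$ is to the left of $v'$, then the stage of $w'$ comes after the stage of $v'$ and its entire subtree. Hence $y(w)\ge y(w')>y(v)$.
\item If $w'$ is to the right of $v'$, then the whole subtree of $v'$ lies in the $x$-window of $v'$, to the left of $w'$. Hence $x(w)\ge x(w')>x(v)$.
\end{itemize}
In either case $w\notin R_v$. Together with the $x$-separation of the trees, this shows that $R_v$ contains exactly the vertices of $path(v)$, which is what Step 4 needed.
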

\begin{proof}
    We use induction on $c$. For $c=1$ the hypergraph $H_k^c$ has a single edge, and hence we can pick any set of $k$ ascending points. For $c>1$, we find the realization of $H_k^c$ in two phases. In the first phase, we embed the vertices such that the following properties hold.
    \begin{itemize}
        \item[a)] The vertices of a given stage $S$ lie on a  horizontal line, the \emph{stage line of $S$}, and are ordered according to $<_S$ along the line.
        \item[b)] The path edges are realizable by axis-parallel rectangles in this embedding.
    \end{itemize}

    Then, in the second phase, we perturb the vertices in each stage so that the transversal hyperedges also become realizable.

    \textbf{Phase 1}. We start by placing the roots (the level-0 stage) on a horizontal line in order. 
    Then iteratively pick a stage $S$ that is already embedded but whose child stages $T_1,T_2,\dots, T_r$ are not, and embed the child stages simultaneously in the following way. Pick $r$ distinct horizontal lines below the line of $S$ and above the next stage line, if there is one.  For each $i\in \{1,\dots, r\}$ place the vertices of $T_i$ on the $i$-th line so that  each vertex has the same $x$-coordinate as its parent in $S$. (This also ensures that $T_i$ is ordered according to $<_{T_i}$ along the stage line.)
    
       Next, for each vertex $v\in S$ slightly shift the children of $v$ to the left in the following way. Let $w$ be the vertex preceding $v$ in the $x$-coordinate ordering of the whole set of points,  if there is any. We shift the children of $v$ so that each stays on its stage line, they form a descending set, and they stay between $v$ and $w$ in the $x$-coordinate ordering, see Figure \ref{fig:shift}.

      \begin{figure}[!ht]
          \centering
          \includegraphics[width=0.7\linewidth]{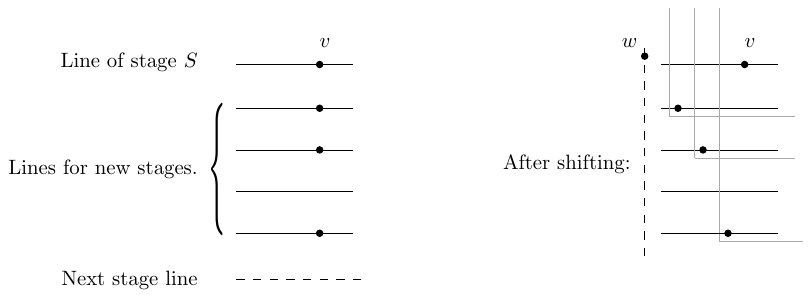}
          \caption{The children of $v$ are placed below $v$ then shifted to the left to form a descending set.}
          \label{fig:shift}
      \end{figure}
    
    After this process, each non-root vertex is below and to the left of its parent, and hence each $path(v)$ is an ascending set. Furthermore, when we shift the vertices to the left, the order of the vertices within a stage does not change.  

    For each vertex $v$, let $R_v$ be the rectangle whose bottom-left corner is $v$ and whose top-right corner is $root(v)$. The rectangle $R_v$ contains $path(v)$, since the path is ascending. We claim that it contains no other vertex.
    
     We only need to show that any vertex $w\notin path(v)$ does not lie in $R_v$. As $v$ lies between $root(v)$ and the preceding root vertex, we may assume that $root(v)=root(w)$. Let $p$ denote the highest level vertex in $path(v)\cap path(w)$.  Let $v'$ and $w'$ denote the two children of $p$ in $path(v)$ and $path(w)$, respectively. By the definition of $p$, we know that $v'$ and $w'$ are distinct vertices and that they were created when we processed the stage containing $p$. By construction, the children of $p$ are descending, hence  $v'$ is either above and to the left of $w'$ or below and to the right of $w'$, see Figure \ref{fig:pathedges}. 

    \begin{figure}[!ht]
        \centering
        \includegraphics[width=0.4\linewidth]{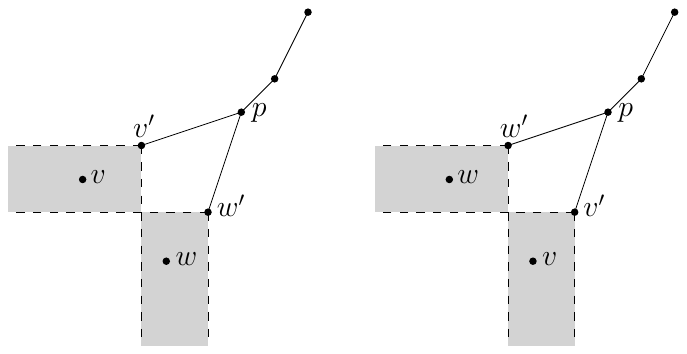}
        \caption{The two cases depending on how $v'$ and $w'$ were placed. The gray regions indicate the possible positions of $v'$ and $w'$ in the plane.}
        \label{fig:pathedges}
    \end{figure}
    Suppose $v'$ is above and to the left of $w'$. All descendants of $w'$, including $w$, are below $w'$. On the other hand,  all descendants of $v'$ are placed later than $w'$, hence they are on  lines above the line of $w'$.  Therefore, $v$ is above $w'$, and $w$ cannot be in $R_v$. 
    
    Suppose $v'$ is below and to the right of $w'$. All descendants of $w'$, including $w$, are to the left of $w'$. As the descendants of $v'$ are placed later than $w'$, they are all to the right of it. Hence $v$ is to the right of $w$ and $w$ cannot be in $R_v$.  Thus $R_v$ contains exactly the path vertices, and every path edge is realized.

    \textbf{Phase 2.}    Next, we realize the transversal edges. Right now the vertices of a given stage $S$ of level $j$ lie on a horizontal line and their $x$-coordinate order matches $<_S$, that is,  the $m^{k-j-1}$ copies of $H_k^{c-1}$ that are formed by the transversal edges occupy disjoint consecutive blocks of $m$ vertices. 

    Note that if we have a realization of any hypergraph by axis-parallel rectangles, then one can obtain new realizations by changing the position of the points. As long as the $x$ and $y$-coordinate orders of the points do not change, we can realize the same hypergraph using rectangles.
    
      For each block, replace the $m$ points by a very thin realization of $H_k^{c-1}$ whose vertices are close to the corresponding vertices in the block. If the copy is thin enough, the rectangles cannot contain points from any other stage. If the perturbation is small enough,  the existing path edges do not change. Hence, we have realized $H_k^{c}$ by axis-parallel rectangles. Each realizing rectangle contains an ascending
$k$-set by construction in Phase 1 and by induction in Phase 2.
    
\end{proof}

Next, we show that there is a realization where the $y$-projections of the rectangles form a nested family. Recall that a family of intervals $\Ic$ is \emph{nested} if for any $A,B\in \Ic$ we have $A\subseteq B$, $B\subseteq A$ or $A\cap B= \emptyset$. (We allow the family to contain an interval more than once.) 

\begin{lemma}\label{lem:nested}
    There is a realization of $H_k^c$ with axis-parallel rectangles such that the projection of the rectangles to the $y$-axis is nested. 
\end{lemma}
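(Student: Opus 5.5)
The plan is to keep the realization from Lemma~\ref{lem:realize} and change only the rectangles, by choosing the bottoms of the path rectangles more carefully. The whole argument runs by induction on $c$, using the statement of the present lemma for $H_k^{c-1}$. For $c=1$ there is a single rectangle, which is trivially nested. For $c>1$, I will redo Phase~2 of Lemma~\ref{lem:realize}: each block of each stage is replaced by a very thin copy of a realization of $H_k^{c-1}$ whose $y$-projections are nested, which exists by induction. A vertical affine rescaling followed by a translation preserves nestedness, so inside each copy the $y$-projections of the transversal rectangles are nested. Moreover, all of them lie in a thin horizontal band around the stage line of that stage. I choose the bands so thin that the bands of different stage lines are pairwise disjoint.

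Next I deal with the path rectangles. In Phase~1 every root lies on the same top line $y=y_0$, and $R_v$ has its top side on this line. I will replace $R_v$ by a rectangle $R'_v$ with the same top side and the same $x$-range $[x(v),x(root(v))]$. Its bottom side is lowered to a height $b_v$ slightly below the band of the stage line of $v$, but still above every other band. I then have to check that $R'_v$ contains exactly $path(v)$. The only new points it could pick up have $y$-coordinate in $(b_v,y(v))$, so they lie in the band of $v$'s own stage. The key observation is that within a stage every vertex has a different root. Also each vertex lies between its root and the preceding root in the $x$-order. Hence the slab $[x(v),x(root(v))]$ contains no other vertex of $v$'s stage, and this survives the Phase~2 perturbation if it is small enough. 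So $R'_v\cap P=path(v)$.

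Finally I check nestedness. Each path interval has the form $[b_v,y_0]$, so any two of them are nested because they share the top endpoint. Take a transversal interval, which lies inside the band $\beta$ of some stage line $L$. If $L$ is at or above the line of $v$, then $\beta\subseteq[b_v,y_0]$, since $b_v$ lies below the band of $v$'s line. Otherwise $L$ is strictly below $v$'s line and, by the choice of $b_v$, $\beta$ lies entirely below $b_v$, so the two intervals are disjoint. Transversal intervals in different bands are disjoint, and within one band they are nested by induction. The step I expect to need the most care is exactly this choice of $b_v$. If one kept the bottom at $y(v)$, as in Lemma~\ref{lem:realize}, then $y(v)$ could fall strictly inside a transversal interval of its own copy of $H_k^{c-1}$. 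Lowering the bottom past the whole band is what fixes this, and it is legal only because of the ``one vertex of each stage per root-slab'' property.
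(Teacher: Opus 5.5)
Your plan follows the paper's proof closely. All path rectangles share a common top, their bottoms are pushed just below the band of $v$'s stage line, bands of different stages are disjoint, and nestedness inside each copy of $H_k^{c-1}$ comes from induction. Your explicit check that lowering the bottom captures nothing new is a useful detail the paper hides in ``enlarge each slightly'': the slab $[x(v),x(root(v))]$ meets $v$'s stage only in $v$.

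\textbf{Main gap.} You claim that transversal intervals ``within one band are nested by induction''. Induction only gives nestedness inside a \emph{single} copy of $H_k^{c-1}$. A stage of level $j<k-1$ carries $m^{k-j-1}\ge 2$ disjoint copies (for $k\ge 2$), all squeezed into the same band. Your placement says nothing about how intervals from different copies interact.

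Already for $c=2$ this fails. Each copy is then a single rectangle around $k$ ascending points. Two such rectangles in one band, with $y_L$ the height of the stage line, can have $y$-projections such as $[y_L-2\varepsilon,y_L+\varepsilon]$ and $[y_L-\varepsilon,y_L+2\varepsilon]$, which are neither nested nor disjoint.

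The paper handles exactly this. Inside a stage's band, the different copies are placed at slightly different vertical offsets, so that projections from different copies are pairwise disjoint. Giving all copies of a stage identical vertical placement would also work, since repeated intervals are allowed. Either fix is compatible with your path-rectangle argument: $R'_v$'s $x$-slab contains no other vertex of $v$'s stage, so rearranging that stage vertically inside its band cannot change $R'_v\cap P$.

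\textbf{Smaller slip.} The common top cannot stay at $y_0$. Phase~2 also perturbs the level-0 stage, so some roots, and parts of some level-0 transversal intervals, rise above $y_0$. Then $root(v)$ may fall outside $R'_v$, and your inclusion $\beta\subseteq[b_v,y_0]$ fails for the level-0 band. Put the common top above the entire level-0 band. The paper does this by enlarging the path rectangles before the perturbation and then choosing the bands thinner than that margin.
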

\begin{proof}
    We modify the construction in Lemma \ref{lem:realize}. Again we proceed by induction on $c$, the $c=1$ case is trivial.

When realizing path edges via rectangles $R_v$,  the top side of each rectangle lies on the level-0 stage line. Hence, all path rectangles share the same upper $y$-endpoint in projection, hence their $y$-projections form a nested family. Enlarge  each slightly so that no point lies on any rectangle boundary.

For transversal edges, each is realized inside an arbitrarily thin horizontal neighborhood of a stage line. If this neighborhood is thin enough, then the $y$-projection of any transversal rectangle is either fully contained in the $y$-projection of a given path rectangle or disjoint from it (because points are not on boundaries). Projections from different stages are disjoint by construction.

Finally, within a single stage, we may place the disjoint copies of $H_k^{c-1}$ at slightly different vertical offsets (still within the stage neighborhood), so that the projection intervals of rectangles from different copies are disjoint. Inside each copy, nestedness holds by induction. Therefore, the $y$-projections form a nested family.
\end{proof}

\subsection{Large girth variant}

For $g\ge2$, a $g$-cycle of a hypergraph is an alternating sequence, $(v_1, E_1, v_2, E_2,\dots,v_g, E_g)$, of distinct vertices $v_1,\dots, v_g$ and distinct edges $E_1,\dots, E_g$ such that $v_g, v_1\in E_g$, and $v_i, v_{i+1}\in E_i$, for $i = 1,\dots,g - 1$. The \emph{girth} of a hypergraph is the length of the smallest cycle in the hypergraph. 

We show that in the graph case $k=2$ we can achieve arbitrarily large girth. Let us recall a construction from \cite{kostochka1999properties} using our terminology of stages. For each $c,g\ge 1$ we construct an ordered graph $G^c(g)$ of girth at least $g$ and chromatic number at least $c+1$. We use induction on $c$. For $c=1$, the graph $K_2$ works.

For $c>1$, fix an  auxiliary hypergraph $H=(V_H,\Ec_H)$ of uniformity $|G^{c-1}(g)|$, girth $g$, and chromatic number at least $c+1$ (Such hypergraphs exist by standard probabilistic arguments, see \cite{ErdosHajnal1966}). We use $H$ to define our stages. 

 Create a stage of level 0 that contains the vertices of $H$, but, contrary to the previous construction, do not place any edges inside this stage. All of the remaining stages are going to be of level 1. For each edge $S\in \Ec_H$ add a stage $T(S)$ of level 1. It contains a child vertex $v'$ for each $v\in S$. We connect $v$ and $v'$, and $<_{T(S)}$ is the ordering inherited from $S$. As $H$ is $|G^{c-1}(g)|$-uniform, $T(S)$ contains $|G^{c-1}(g)|$ vertices. Add edges in $T(S)$ to realize a single copy of the graph $G^{c-1}(g)$ such that the ordering of the vertices matches the ordering $<_{T(S)}$.  

\begin{lemma}
    The chromatic number of $G^{c}(g)$ is at least $c+1$.
\end{lemma}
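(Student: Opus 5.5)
We argue by induction on $c$, in the same spirit as the proof that $H_k^c$ is not properly $c$-colorable. For $c=1$ the graph $G^1(g)=K_2$ has chromatic number $2$. For $c>1$, suppose for contradiction that $G^c(g)$ has a proper $c$-coloring $\varphi$. We use two facts about the graph. First, its edges are exactly the parent--child edges $vv'$ between level $0$ and level $1$, together with the edges of the copies of $G^{c-1}(g)$ placed inside the level-$1$ stages. Second, the level-$1$ stages are pairwise vertex-disjoint, one stage $T(S)$ for each hyperedge $S\in\Ec_H$.

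The first step is to restrict $\varphi$ to the level-$0$ stage, which is exactly $V_H$. This gives a $c$-coloring of the auxiliary hypergraph $H$. Since $\chi(H)\ge c+1$, this coloring is not proper, so some hyperedge $S\in\Ec_H$ is monochromatic, say red. Note that we do not use the girth of $H$ here; only its chromatic number matters.

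The second step looks at the stage $T(S)$. Its vertices span a copy of $G^{c-1}(g)$. By the induction hypothesis this copy has chromatic number at least $c$, so it cannot be properly colored with the $c-1$ non-red colors. The restriction of $\varphi$ to $T(S)$ is a proper coloring of this copy, since its edges are edges of $G^c(g)$. Hence it must use red, so some vertex $v'\in T(S)$ is red.

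The third step derives the contradiction. The vertex $v'$ is the child of some $v\in S$, and $v$ is red because $S$ is monochromatic red. Since $vv'$ is an edge of $G^c(g)$, this is a monochromatic edge, contradicting the properness of $\varphi$. Therefore $\chi(G^c(g))\ge c+1$.

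The argument is essentially routine. The only point needing care is the second step: we must make sure the copy of $G^{c-1}(g)$ in $T(S)$ really inherits the induction hypothesis. This holds because the copy is placed as an isomorphic subgraph on the vertices of $T(S)$. Also, the ordering constraint $<_{T(S)}$ affects only the later geometric realization, not the chromatic number.
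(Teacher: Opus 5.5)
Your proof is correct and is essentially the paper's argument. A monochromatic hyperedge $S$ of $H$ in the level-$0$ stage forbids that color on $T(S)$, which contradicts the induction hypothesis that the copy of $G^{c-1}(g)$ in $T(S)$ needs $c$ colors. Finding a red child $v'$ of a red parent $v$ is just the contrapositive of "red is forbidden on $T(S)$".
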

\begin{proof}
    We proceed by induction on $c$. For $c=1$  the chromatic number is 2. For $c\ge2$ suppose $G^{c}(g)$ admits a  $c$-coloring.  Since $H$ has a chromatic number of at least $c+1$, in any $c$-coloring of the stage of level 0 some edge $S\in \Ec_H$ is monochromatic. This implies that on stage $T(S)$ one of the colors is forbidden. But $T(S)$ spans a copy of $G^{c-1}(g)$, whose chromatic number is at least $c$ by induction, a contradiction.
\end{proof}

\begin{lemma}\label{lem:girth}
    The girth of $G^{c}(g)$ is at least $g$.
\end{lemma}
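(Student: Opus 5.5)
We argue by induction on $c$, the case $c=1$ being trivial since $K_2$ has no cycles. For $c\ge 2$ we study the structure of $G^c(g)$. The edges of $G^c(g)$ come in two kinds. The \emph{tree edges} $vv'$ join a level-0 vertex $v$ to its copy $v'$ in some stage $T(S)$. The \emph{inner edges} lie inside a single stage $T(S)$ and form a copy of $G^{c-1}(g)$. Each level-1 vertex lies in exactly one stage and has exactly one tree edge, going to its parent. Level-0 vertices carry no inner edges at all.

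Fix a cycle $C$ in $G^c(g)$. We split into two cases.

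\emph{Case 1: $C$ lies entirely inside one stage $T(S)$.} Then $C$ is a cycle of a copy of $G^{c-1}(g)$, so by induction its length is at least $g$.

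\emph{Case 2: $C$ leaves every stage it visits.} Then $C$ uses tree edges. Every time $C$ passes through a level-0 vertex $v$, it enters and leaves $v$ through two tree edges, to $v'\in T(S)$ and $v''\in T(S')$. Since $v$ has at most one child in each stage, $S\neq S'$. Between consecutive visits to level 0, the cycle travels inside a single stage. This is because a level-1 vertex has only one tree edge, so the only way to leave a stage is back up to a parent.

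We now contract each maximal level-1 segment of $C$. The segment enters $T(S)$ at the child of some $u$ and exits at the child of some $w$, with $u,w\in S$ and $u\ne w$, because distinct level-1 vertices of $T(S)$ have distinct parents. This turns $C$ into a closed alternating sequence $(u_1,S_1,u_2,S_2,\dots,u_t,S_t)$ in $H$, with $u_i,u_{i+1}\in S_i$.

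The main obstacle is to check that this sequence is a genuine cycle of $H$, that is, that the vertices $u_i$ are distinct and the edges $S_i$ are distinct.

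\textbf{Distinct vertices.} The $u_i$ are distinct because $C$ visits each vertex only once.

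\textbf{Distinct edges.} Suppose $S_i=S_j$ with $i\ne j$. Then $C$ enters stage $T(S_i)$ along two different segments, and the vertex sets of those segments are disjoint. To rule this out, we need more than a direct count. If the same stage is visited twice, we pick two indices $i<j$ with $S_i=S_j$ that are closest, and we shortcut. The part $(u_{i+1},S_{i+1},\dots,u_j)$ together with the edge $S_i$ forms a shorter closed walk in $H$. By the minimality choice its edges are distinct, and its vertices are distinct as a subsequence. Its length is $j-i$, and it is a valid cycle provided $j-i\ge 2$. This holds because $j=i+1$ would force $u_{i+1}$ to have two children in the same stage, which is impossible.

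Either way, $H$ contains a cycle of length at most $t$. Note that $t\ge2$, since a single segment cannot both leave and return to the same level-0 vertex through the one child it has in $T(S)$. Since $H$ has girth $g$, we get $t\ge g$. Each $u_i$ is a distinct vertex of $C$, so $|C|\ge t\ge g$ (in fact $|C|\ge 3t$).

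Together the two cases show that every cycle of $G^c(g)$ has length at least $g$. Finally, we note that the girth requirement in the definition of $H$ is exactly what is needed in Case 2, and the induction hypothesis is exactly what is needed in Case 1.
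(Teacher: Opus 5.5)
Your proof is correct and follows the same route as the paper. It uses induction on $c$, handles a cycle inside a single stage by the induction hypothesis, and otherwise contracts the level-1 segments of $C$ into a closed sequence $(u_1,S_1,\dots,u_t,S_t)$ in $H$, shortcutting at a closest pair of repeated edges. You are in fact slightly more careful than the paper, since you explicitly rule out the degenerate cases $t=1$ and $j=i+1$ using that each level-0 vertex has at most one child in each stage.
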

\begin{proof}
    For any fixed $g$, we proceed by induction on $c$. For $c=1$ we have chosen $K_2$, a graph that contains no cycles. For $c>1$ suppose that $C=(v_1,\dots, v_t)$ is a cycle in $G^{c}(g)$.
    
      If each $v_i$ is in the same stage of level 1, then $t\ge g$ by induction. Otherwise, let $x_1,x_2,\dots, x_m$ be the vertices of $C$ that lie in the level-0 stage, listed in cyclic order along $C$. There are no edges in the stage of level 0, nor between the stages of level 1, so the part of $C$ between $x_i$ and $x_{i+1}$ is non-empty and must run in a single stage of level 1 corresponding to some hyperedge $E_i\in \Ec_{H}$. We claim that there is a cycle of length at most $m$ in $H$. Indeed, consider $(x_1,E_1,x_2,E_2,\dots, x_m,E_m)$. By construction $x_m, x_1\in E_m$, and $x_i, x_{i+1}\in E_i$, for $i = 1,\dots,m - 1$. The $x_i$ vertices are distinct. If the $E_i$'s are also distinct, then we have found a cycle of $H$. If the $E_i$ are not distinct, then fix a pair $i, j$ where $E_i=E_j$, $i\ne j$  and $|i-j|$ is minimal. Then $(x_{i+1},E_{i+1},x_{i+2},E_{i+2},\dots, x_{j},E_{j})$ is a cycle of $H$ of length at most $m$.  Since $H$ has girth at least $g$, we obtain $t\ge m \ge g$, that is, the length of $C$ is at least $g$.
    
\end{proof}

The realization of $G^{c}(g)$ by axis-parallel rectangles is essentially the same as in Lemma \ref{lem:realize} (place stages on separated
horizontal lines and realize each stage-internal copy), we omit the details. This finishes the proof of Theorem \ref{thm:girtthm}.

\section{Arithmetic progressions}\label{sec:arithmetic}

In this section we turn our attention to coloring integers with respect to arithmetic progressions. After some preparation, we show that realizability by nested-projection rectangles is equivalent to realizability
by finite arithmetic progressions whose difference is a power of 2.

We will use the following notation for the closed horizontal strip and the upward open horizontal strip: $$S{[a,b]}=\{(x,y)\mid a\le y\le b\} \qquad  S{[a,b)}=\{(x,y)\mid a\le y< b\}.$$

The \emph{van der Corput} sequence is a well-studied notion in discrepancy theory, see for example \cite{matousek1999geometric}. It is defined as follows. Suppose the binary expansion of $n\in \Nb$ is 

\[
n=\sum_{i=0}^{\ell} d_i 2^i,\qquad d_i\in\{0,1\},
\]
then the $n$th term is
\[
a_n=\sum_{i=0}^{\ell} d_i 2^{-i-1}.
\]

We will show that subsequences of the van der Corput sequence provide an interface between the two problems. The connection to coloring points with respect to rectangles is not new. See for example \cite{pach2010coloring}, and the following problem that was proposed by Gábor Tardos for the 2015 Schweitzer competition.
\begin{problem}[Miklós Schweitzer Memorial Competition, 2015. Problem 2.]
 
Let $\{a_n\}$ be the van der Corput sequence. Let $P$ be the set of points on the plane that have the form $(n, a_n)$. Let $G$ be the graph with vertex set $V$ that is connecting any two points $(p, q)$ if there is an axis-parallel rectangle $R$ such that $R\cap V = \{p, q\}$. Prove that the chromatic number of $G$ is finite.
\end{problem}

For a nested family $\Ic$, we can define a rooted forest $T_\Ic$ based on $\Ic$ in the following way. The vertices of $T_\Ic$ are the elements of $\Ic$, and we connect $A$ to $B$ if $A\subseteq B$ but there is no $C\in \Ic$ such that $A\subsetneq C\subsetneq B$. The roots are the maximal elements. Equivalently, $T_\Ic$ is the Hasse diagram of the poset $(\Ic,\subseteq)$. We say that a nested family is a \emph{perfect nested family of depth $t$} if $T_\Ic$ is a perfect binary tree with  $t$ levels so that  each non-leaf interval is the union of its two children. Equivalently, a nested family $\Ic$ is a perfect nested family of depth $t$ if we can index its elements using the $0/1$-sequences of length at most $t-1$ such that $I_{s_1s_2\dots s_r}=I_{s_1s_2\dots s_r0}\cup I_{s_1s_2\dots s_r1}$ for all $s_1,\dots,s_r\in \{0,1\}$ with $r<t-1$.
 
For example, for any $t\in \Nb$, the interval family $\{[\frac{i}{2^k},\frac{i+1}{2^k} )\}$ over all $0\le k<t$ and $0\le i < 2^k$ is a perfect nested family of depth $t$. Note that this is the same family as $\{[a_b, a_b + 2^{-k}) \}$ over all $0\le k<t$ and $0\le b < 2^k$. 
    \begin{observation}\label{obs:extend}
    Any nested family can be extended to a perfect nested family of depth $t$ for some sufficiently large $t\in \Nb$.    
    \end{observation}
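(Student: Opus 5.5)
We argue for a finite nested family $\Ic$ (possibly with repeated members) by a top-down recursive construction. First fix an interval $J_0$ containing every member of $\Ic$; this will be the root. Define a procedure $\mathrm{Ext}(J,\Fc)$, where $J$ is an interval and $\Fc\subseteq\Ic$ is the sub-multiset of members contained in $J$. It returns a family that is nested, contains $\Fc\cup\{J\}$, has $J$ as its unique maximal element, and in which every non-leaf interval is the union of exactly two children. Afterwards we equalize the depths of the leaves.

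\textbf{The recursive step.} If $\Fc=\emptyset$, return $\{J\}$. If $\Fc$ contains a copy of $J$ itself, remove that copy and use the trivial split $J=J\cup J$: both children are $J$, which is allowed since repetitions are permitted. We recurse on one child with the reduced family and on the other with the empty family.

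Otherwise every member of $\Fc$ is a proper subinterval of $J$. Let $M_1,\dots,M_r$ be the maximal members of $\Fc$. They are pairwise disjoint because $\Ic$ is nested, and we list them in left-to-right order.
\begin{itemize}
\item If $r\ge 2$, cut $J$ at a point of the gap between $M_1$ and $M_2$. This gives $J=J'\cup J''$ with $M_1\subseteq J'$ and $M_2,\dots,M_r\subseteq J''$.
\item If $r=1$, write $J=L\cup M_1\cup R$ with $L$ and $R$ the left and right remainders, at least one of them nonempty. Split $J$ into $L$ and $M_1\cup R$, or into $L\cup M_1$ and $R$.
\end{itemize}
In either case recurse on each child with the members of $\Fc$ it contains. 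Every member of $\Fc$ lies entirely in one child, because it sits inside some $M_i$ and no cut passes through an $M_i$. Nestedness of the output is then automatic: distinct branches live in disjoint pieces, and inside a branch the family is nested by induction.

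\textbf{Termination and equalizing depths.} Termination follows from a potential argument: the pair (number of members of $\Fc$, number of members of $\Fc$ strictly inside $J$) decreases lexicographically along every branch. Once the tree is built, let $t$ be its maximal number of levels. Pad every shallower leaf $J$ with the trivial splits $J=J\cup J$ until all leaves are at depth $t$. The result is a perfect nested family of depth $t$ containing $\Ic$.

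\textbf{Main obstacle.} The delicate point is endpoint conventions. With closed intervals, $J$ cannot be written as a union of two disjoint closed intervals, yet two overlapping children neither of which contains the other would violate nestedness. I would fix half-open intervals $[a,b)$, matching the dyadic example before Observation~\ref{obs:extend}. For that I would first argue that a finite nested family of closed intervals can be replaced by half-open ones $[a,b+\varepsilon)$ without changing the containment and disjointness relations, for $\varepsilon$ small. The other pieces (the choice of cut points in gaps, and the case where $M_1$ shares an endpoint with $J$) are routine once this convention is fixed.
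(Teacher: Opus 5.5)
The paper states this observation without proof, so the benchmark is the notion of \emph{perfect} that its later arguments need. The proofs of Theorems~\ref{thm:equiv} and~\ref{thm:general} use that the leaves of $\Ic'$ partition the top interval. As a result, each $y_i$ lies in a unique leaf, and $y_i\in I_{q_1\cdots q_r}$ exactly when $q_1\cdots q_r$ is a prefix of that leaf's index. The primary definition ($T_\Ic$ is a perfect binary tree) forces this: two children of the same node in a Hasse diagram are incomparable, hence disjoint by nestedness.

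Your construction breaks this at one step, the trivial split $J=J\cup J$. You use it both for copies of $J$ in $\Fc$ and for padding shallow leaves. After such a split every point of $J$ lies in both children, so the depth-$(t-1)$ intervals no longer partition $J_0$ and the prefix characterization fails. Also, the three copies of $J$ are pairwise joined in $T_\Ic$, so $T_\Ic$ is not a tree. Your own nestedness argument (``distinct branches live in disjoint pieces'') is false at these splits as well.

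The reading that permits $I_{s0}=I_{s1}=I_s$ cannot be the intended one anyway, because it makes the observation vacuous. List $\Ic=\{A_1,\dots,A_n\}$, put $I_{0^j}=J_0$ and $I_{0^j1}=A_{j+1}$, and let all descendants of $I_{0^j1}$ equal $A_{j+1}$.

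The repair keeps your recursion intact:
\begin{itemize}
\item When $\Fc$ contains copies of $J$, delete them, since the node $J$ already represents them. Then continue with the proper-subinterval case, or return $\{J\}$ if nothing is left.
\item Pad a shallow leaf $[a,b)$ by the honest bisection $[a,\frac{a+b}{2})\cup[\frac{a+b}{2},b)$. Your half-open convention always allows this, because it makes every member and every piece nondegenerate.
\end{itemize}
Your two nontrivial cuts already give disjoint nonempty children. The result is then a perfect nested family in the required sense, containing every member of $\Ic$, with repeated members all sitting at the same node.

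Two minor points. First, your lexicographic potential does not drop at an $r=1$ cut with $L$ and $R$ both nonempty, since the large child still has all of $\Fc$ strictly inside. It does drop at the following cut, so termination survives. Second, $[a,b]\mapsto[a,b+\varepsilon)$ replaces the members of $\Ic$. For the application you should therefore also take $\varepsilon$ smaller than the distance from any interval endpoint to any $y$-coordinate of $P$, and choose $J_0$ to contain those coordinates, so that strip incidences are unchanged.
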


First, we focus on the $D=\{2^i\mid i\in \Nb\}$ case. Let $\Ac_{2^i}$ denote all finite arithmetic progressions with difference from $\{2^i\mid i\in \Nb\}$. This case is essentially equivalent to realizing hypergraphs using nested axis-parallel rectangles.

\begin{proof}[Proof of Theorem \ref{thm:equiv}]

\emph{(Arithmetic progressions $\Rightarrow$ nested rectangles)}.
First, suppose  $V\subset \Zb$. We want to show that $H(V,\Ac_{2^i})$ is realizable by axis-parallel rectangles whose $y$-projections are nested. Consider the planar set of points $P_V=\{(n,a_n)\mid n\in V \}$, where $a_i$ denotes the $i$-th term of the van der Corput sequence. We claim that  every edge of $H(V,\Ac_{2^i})$ can be realized by an axis-parallel rectangle and the resulting family has nested $y$-projections.

Consider the \textbf{infinite} arithmetic progression $A=\{2^tk+b\mid k\in \Zb \}$ for some fixed $t\in \Nb$ and $b<2^t$.  For any $n\in \Nb$, we have $$a_n=\sum\limits_{i=0}^{\ell} d_i2^{-i-1}=\sum\limits_{i=0}^{t-1} d_i2^{-i-1}+\sum\limits_{i=t}^{\ell} d_i2^{-i-1}=a_{(n \bmod 2^t)}+\sum\limits_{i=t}^{\ell} d_i2^{-i-1}.$$ 

Moreover, the values $\{a_0,\dots,a_{2^t-1}\}$ are spaced by at least $2^{-t}$. Therefore,
\[
n\equiv b\pmod{2^t}\quad\Longleftrightarrow\quad a_b \le a_n < a_b+2^{-t}.
\]

Thus the points $\{(n,a_n)\mid n\in A\cap V\}$ are exactly the points of $P_V$ that lie in the strip $S[a_b,a_b+2^{-t})$. Choosing a finite consecutive subset of $A$ corresponds to intersecting
this strip with a vertical slab in $x$, i.e., to an axis-parallel rectangle. It is also easy to see that the family of intervals $\{[a_b,a_b+2^{-t}) \mid b,t\in \Nb \text{ and } b<2^t] \} $ is nested, see Figure \ref{fig:vandermap}. Hence, we can represent each edge using an upward open rectangle, and we can choose them such that their $y$-projection is nested. 

\begin{figure}[!ht]
    \centering
    \includegraphics[width=0.7\linewidth]{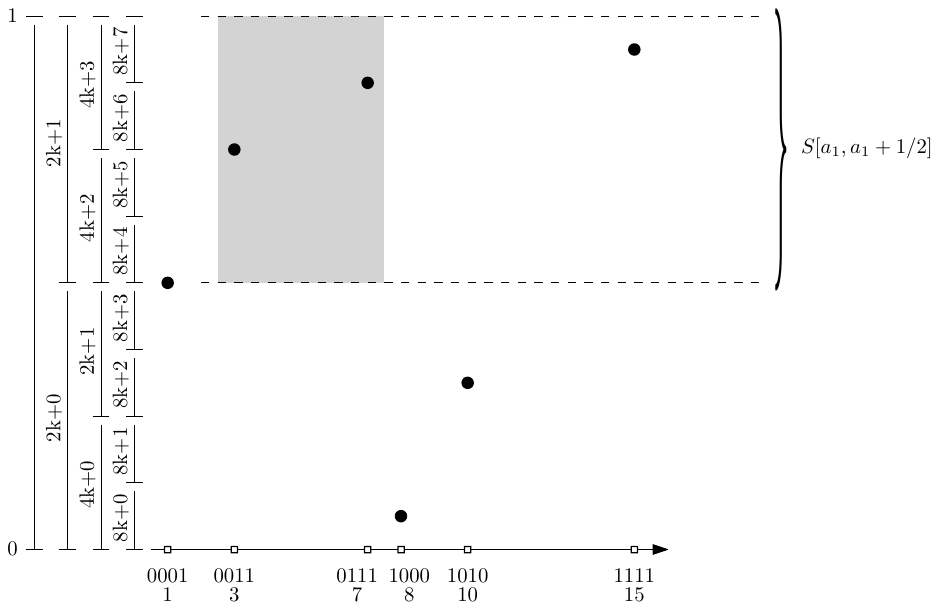}
    \caption{The point set given by the van der Corput sequence for $V=\{1,3,7,8,10,15\}$. The grey rectangle captures $A\cap V$ for the arithmetic progression $A=3,5,7$.}
    \label{fig:vandermap}
\end{figure}

\emph{(Nested rectangles $\Rightarrow$ arithmetic progressions )}. Let $P=\{p_1,\dots,p_n\}\subset\R^2$ be ordered by increasing $x$-coordinate, and let $\Rc$ be a family of axis-parallel rectangles whose $y$-projections form a nested family $\Ic$. 
By Observation~\ref{obs:extend}, extend $\Ic$ to a perfect nested family $\Ic'$ of depth $t$. Index $\Ic'$ by binary strings of length at most $t-1$ as $\{I_{s_1\cdots s_r}\}$, such  that $I_{s_1s_2\dots s_r}=I_{s_1s_2\dots s_r0}\cup I_{s_1s_2\dots s_r1}$ for all $s_1,\dots,s_r\in \{0,1\}$ with $r<t-1$.

For each point $p_i=(x_i,y_i)$, since $\Ic'$ partitions its top interval into leaf intervals, there is a unique leaf $I_{s_1\cdots s_{t-1}}$ containing $y_i$. Define
\[
v_i=\Bigl(\sum_{j=1}^{t-1} s_j 2^{j-1}\Bigr) + i\,2^{t-1},
\qquad
V=\{v_1,\dots,v_n\}\subset\Nb.
\]
Because the first term is $<2^{t-1}$, we have $v_1<\cdots<v_n$, so the order on $V$ matches the $x$-order on $P$.

Now consider a strip corresponding to an interval $I_{q_1\cdots q_r}$ (with $r\le t-1$). Let
\[
A=\Bigl\{ k2^{r} + \sum_{j=1}^{r} q_j2^{j-1}\ \Bigm|\ k\in\Zb\Bigr\}.
\]
This is an arithmetic progression with difference $2^r$. Consider a point $p_i=(x_i,y_i)$ and let $I_{s_1\dots s_{t-1}}$ denote the leaf  interval that contains $y_i$. Since $\Ic'$ is perfect, $p_i$ is in the strip  $S[I_{q_1q_2\dots q_r}]$ if and only if $q_1\dots q_r$ is a prefix of $s_1\dots s_{t-1}$. That is, if and only if  $v_i$ is of the form  $$v_i=\bigl(\sum\limits_{j=1}^{r} q_j2^{j-1}\bigr)+\bigl(\sum\limits_{j=r+1}^{t-1} s_j2^{j-1}\bigr)+i2^{t-1}=\bigl(\sum\limits_{j=1}^{r} q_j2^{j-1}\bigr)+2^{r}\bigl(\bigl(\sum\limits_{j=r+1}^{t-1} s_j2^{j-1-r}\bigr)+i2^{t-1-r}\bigr).$$ Hence, this prefix condition is equivalent to $v_i\equiv \sum_{j=1}^{r} q_j2^{j-1}\pmod{2^r}$, i.e., to $v_i\in A$. Hence intersections of strips (and therefore rectangles) with $P$ correspond to intersections of power-of-two progressions with $V$.

\end{proof}

\subsection{The general case: Proof of Theorem \ref{thm:general}}

\begin{proof}[Proof of Theorem \ref{thm:general}]
  Let $D\subset\Nb$ be infinite. We will use a rapidly growing subsequence of $D$.

Define $d_1<d_2<\cdots$ recursively as follows:
\[
d_{i}=\min\Bigl\{ d\in D : d>2^{i-1}\cdot \lcm(d_1,\dots,d_{i-1})\Bigr\}.
\]
Then for each $i\ge1$ we have
\begin{equation}\label{eq:growth}
\lcm(d_1,\dots,d_{i+1})\ \ge\ d_{i+1}\ >\ 2^i\,\lcm(d_1,\dots,d_i).
\end{equation}

 This ensures that certain modular equations are solvable. The following lemma captures this property.  

\begin{lemma}\label{lem:enough}
Let $d_1<d_2<\cdots$ satisfy~\eqref{eq:growth}. If the modular system $\{x\equiv r_i \pmod{d_i}\}_{i=1}^j$ has a solution, then there are at least $2^j$ distinct choices of $r_{j+1}\in\{0,1,\dots,d_{j+1}-1\}$ such that $\{x\equiv r_i \pmod{d_i}\}_{i=1}^{j+1}$ is solvable.
\end{lemma}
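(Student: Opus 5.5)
By the Chinese Remainder Theorem, the solutions of the system $\{x\equiv r_i \pmod{d_i}\}_{i=1}^j$ form a single residue class modulo $L_j=\lcm(d_1,\dots,d_j)$. So I would fix one solution $x_0$; the full solution set is then $x_0+L_j\Zb$. For any $y$ in this set, choosing $r_{j+1}=y \bmod d_{j+1}$ makes the extended system solvable, since $y$ itself is a solution. The task therefore reduces to counting the distinct residues modulo $d_{j+1}$ taken by the class $x_0+L_j\Zb$.

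Next I would identify these residues. The set $\{x_0+kL_j \bmod d_{j+1} \mid k\in\Zb\}$ is a coset of the subgroup of $\Zb/d_{j+1}\Zb$ generated by $L_j$. That subgroup is $g\Zb/d_{j+1}\Zb$ with $g=\gcd(L_j,d_{j+1})$, so it has exactly $d_{j+1}/g$ elements. Hence the number of admissible $r_{j+1}\in\{0,\dots,d_{j+1}-1\}$ is $d_{j+1}/\gcd(L_j,d_{j+1})$. Equivalently, this number is $\lcm(L_j,d_{j+1})/L_j=L_{j+1}/L_j$.

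Finally I would apply \eqref{eq:growth} with $i=j$: it gives $L_{j+1}>2^jL_j$, so $L_{j+1}/L_j>2^j$, which yields at least $2^j$ choices as claimed. (Note that the bound needed is on $\lcm(d_1,\dots,d_{j+1})/\lcm(d_1,\dots,d_j)$, not merely on $d_{j+1}$. This is why \eqref{eq:growth} is phrased in terms of the lcm, and the recursive choice of $d_i$ guarantees it.) The only point needing care is the identity $d/\gcd(L,d)=\lcm(L,d)/L$, which is standard; I do not expect any real obstacle.
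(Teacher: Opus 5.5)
Your proof is correct and is essentially the paper's argument: both rest on the fact that $x_0+kL_j\equiv x_0+k'L_j \pmod{d_{j+1}}$ forces $L_{j+1}=\lcm(L_j,d_{j+1})$ to divide $(k-k')L_j$, combined with $L_{j+1}>2^jL_j$ from \eqref{eq:growth}. The only difference is packaging: the paper exhibits the $2^j$ solutions $r+iL_j$, $0\le i<2^j$, and shows their residues modulo $d_{j+1}$ are pairwise distinct, whereas you count the whole coset to get the exact number $L_{j+1}/L_j$ of admissible residues, which is slightly sharper but not needed.
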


\begin{proof}
Let $L=\lcm(d_1,\dots,d_j)$ and suppose the system has a solution, i.e.,\ a residue class $x\equiv r\pmod{L}$. Consider
\[
\ell_i=r+iL,\qquad i=0,1,\dots,2^j-1.
\]
Each $\ell_i$ solves the first $j$ congruences. Choose $r_{j+1}\equiv \ell_i\pmod{d_{j+1}}$; then $\ell_i$ is a solution of the extended system.

We claim the residues $\ell_i\bmod d_{j+1}$ are all distinct for $0\le i<2^j$. If $\ell_{i_1}\equiv \ell_{i_2}\pmod{d_{j+1}}$ for $i_1\ne i_2$, then $d_{j+1}\mid (\ell_{i_1}-\ell_{i_2})$. But $L\mid (\ell_{i_1}-\ell_{i_2})=(i_1-i_2)L$ as well, so $\lcm(L,d_{j+1})=\lcm(d_1,\dots,d_{j+1})$ divides the difference. On the other hand,
\[0<
|\ell_{i_1}-\ell_{i_2}| \le (2^j-1)L < 2^j L < \lcm(d_1,\dots,d_{j+1}),
\]
where the strict inequality uses~\eqref{eq:growth}. This contradiction shows distinctness, giving at least $2^j$ choices.
\end{proof}

 Let $P=\{p_1,\dots,p_n\}\subset\R^2$ be ordered by increasing $x$-coordinate, and let $\Rc$ be a family of axis-parallel rectangles whose $y$-projections form a nested family $\Ic$. By Observation~\ref{obs:extend}, extend $\Ic$ to a perfect nested family $\Ic'$ of depth $t$. Index $\Ic'$ by binary strings of length at most $t-1$ as $\{I_{s_1\cdots s_r}\}$, such  that $I_{s_1s_2\dots s_r}=I_{s_1s_2\dots s_r0}\cup I_{s_1s_2\dots s_r1}$ for all $s_1,\dots,s_r\in \{0,1\}$ with $r<t-1$.

For each interval $I_{s_1\cdots s_m}$ at depth $m$ we will choose a residue
\[
r_{s_1\cdots s_m}\in\{0,1,\dots,d_m-1\}
\]
such that:
\begin{enumerate}[label=(\roman*),leftmargin=2.2em]
\item for every string $s_1\cdots s_m$, the system $\{x\equiv r_{s_1\cdots s_j}\pmod{d_j}\}_{j=1}^{m}$ is solvable;
\item for each fixed $m$, the $2^m$ values $\{r_{s_1\cdots s_m}\}$ are pairwise distinct.
\end{enumerate}
We choose these residues inductively over $m$. For the step $m\to m+1$, each solvable system at depth $m$ has at least $2^m$ extensions by Lemma~\ref{lem:enough}; we choose distinct residues for the children to satisfy (ii).

For each leaf string $s_1\cdots s_{t-1}$, fix one integer $f_{s_1\cdots s_{t-1}}<\lcm(d_1,\dots, d_{t-1})$
solving $\{x\equiv r_{s_1\cdots s_j}\pmod{d_j}\}_{j=1}^{t-1}$.  For each $p_i=(x_i,y_i)$, let $I_{s_1\cdots s_{t-1}}$ be the unique leaf interval containing $y_i$ and set
\[
v_i = f_{s_1\cdots s_{t-1}} + i\cdot \lcm(d_1,\dots,d_{t-1}),
\qquad
V=\{v_1,\dots,v_n\}\subset\Zb.
\]
Since $f_{s_1\cdots s_{t-1}}<\lcm(d_1,\dots, d_{t-1})$, we have $v_1<\cdots<v_n$. Hence the order on $V$ matches the $x$-order on $P$. 

Finally, consider any strip corresponding to $I_{q_1\cdots q_r}$ (with $r\le t-1$). Let $A$ be the set of integers satisfying the congruence 
\[
x\equiv r_{q_1\cdots q_r}\pmod{d_r}
.\]
This set is an arithmetic progression with common difference $d_r\in D$. Consider a point $p_i=(x_i,y_i)$ and let $I_{s_1\dots s_{t-1}}$ denote the leaf interval that contains $y_i$. Since $\Ic'$ is perfect, $p_i$ is in the strip  $S[I_{q_1q_2\dots q_r}]$ if and only if $q_1\dots q_r$ is a prefix of $s_1\dots s_{t-1}$. The integer $v_i$ is a solution of $\{x\equiv r_{s_1\cdots s_j}\pmod{d_j}\}_{j=1}^{t-1}$. Hence, if $q_1\dots q_r$ is a prefix of $s_1\dots s_{t-1}$, then $v_i$ is a solution of $x\equiv r_{q_1\cdots q_r}\pmod{d_r}
$. On the other hand, if it is not a prefix, then $v_i$ is a solution of $x\equiv r_{s_1\cdots s_r}\pmod{d_r}
$, and by $(ii)$ we have $r_{s_1\cdots s_r}\neq r_{q_1\cdots q_r}$. Hence, $p_i$ is in the strip $S[I_{q_1q_2\dots q_r}]$ if and only if $v_i$ is in the arithmetic progression $A$. Therefore strip (and hence rectangle) intersections correspond to intersections with arithmetic progressions of allowed differences, completing the realization.

    \end{proof}

\section{Final remarks}
As we have seen, for $k=2$ we have constructions of large girth. The main reason why the same ideas do not work for $k=3$ is that certain pairs of path edges of a tree overlap in more than one vertex.    
\begin{problem}
    Is it true that for any constants $c,k,g\ge1$ there  exists a finite set of points and a finite set of axis-parallel rectangles such that their incidence hypergraph is  $k$-uniform, has chromatic number at least $c$ and girth at least $g$?
\end{problem}

We note that even the following easier question is open. 

\begin{problem}
    Is it true that for any $c\ge1$ there exists a finite set of points and a finite set of axis-parallel rectangles such that their incidence hypergraph is $3$-uniform, has chromatic number at least $c$ and any two edges intersect in at most one vertex?
\end{problem}

\section*{Acknowledgments}
This project was initiated at the  Early Career Researchers in Combinatorics 2024 workshop in Edinburgh. The author thanks the organizers for creating a stimulating research environment. The author is grateful to Tim Planken for pointing out the connection to the construction of Chekan and Ueckerdt. I also thank Dömötör Pálvölgyi for suggesting that there is a connection between the two problems and for reviewing the manuscript.

\bibliographystyle{abbrv}
\bibliography{sample}

\end{document}